\theoremstyle{plain}
\newtheorem{thm}{Theorem}[section]
\newtheorem{lem}[thm]{Lemma}
\newtheorem{prop}[thm]{Proposition}
\newtheorem{cor}[thm]{Corollary}
\theoremstyle{definition}
\newtheorem{defn}[thm]{Definition}
\newtheoremstyle{remark}
{15pt}
{1em}
{}
{}
{\bf}
{:}
{ }
{}
\theoremstyle{remark}
\newtheoremstyle{not}
{15pt}
{5pt}
{}
{}
{\bf}
{:\hspace{0.05cm}}
{ }
{}
\theoremstyle{not}
\newtheoremstyle{proof}
{}
{2pt}
{}
{}
{\it}
{:}
{ }
{}
\theoremstyle{proof}
\renewcommand{\O}{\mathcal{O}}
\newcommand{\Os}{\tilde{\O}}
\newcommand{\Uad}{U^{ad}}
\newcommand{\On}{\Omega_{n}}
\newcommand{\Oext}{\Omega^{ext}}
\newcommand{\RO}{\partial \Omega}
\newcommand{\tr}{\text{tr}}
\newcommand{\Umg}[2]{B_{#1}(#2)}
\newcommand{\R}[1]{\mathbb{R}^{#1}}
\newcommand{\N}[1]{\mathbb{N}^{#1}}
\newcommand{\C}[2]{C^{#1}(#2)}
\newcommand{\Ck}[3]{[C^{#1}(#2)]^{#3}}		
\newcommand{\D}[2]{\mathscr{D}^{#1}(#2)}
\newcommand{\G}{\mathcal{G}}
\newcommand{\seq}[1]{(#1)_{n\in \mathbb{N}}}
\newcommand{\subseq}[1]{(#1)_{k \in \mathbb{N}}}
\newcommand{\Norm}[2]{\Vert #1 \Vert_{#2}} 
\newcommand{\dist}{\text{dist}}
\begin{document}

\title{\sc Modeling, Minimizing and Managing the Risk of Fatigue for Mechanical Components}

\author{{\sc L. Bittner${}^*$, H. Gottschalk${}^*$, M. Gr\"oger${}^{*,\dag}$}\\
{\sc N. Moch${}^*$, M. Saadi${}^*$ and S. Schmitz${}^\ddag$}\\
{\small  ${}^ *$School of
Mathematics and Science, Gau\ss str. 20,} \\\small{ Bergische Universit\"at 
Wuppertal, Germany}\\
{\small ${}^\dag$Siemens Energy, Service Engineering, Mellinghofer Str. 55, M\"ulheim/Ruhr, Germany}\\
{\small ${}^\ddag$Siemens Energy, Gas Turbine Engineering, Huttenstr. 12,  Berlin, Germany}\\
{\small \tt laura.bittner@math.uni-wuppertal.de}\\
{\small \tt \{hanno.gottschalk,nmoch\}@uni-wuppertal.de}\\
{\small \tt \{michael.groeger.ext,schmitz.sebastian\}@siemens.com}\\
}

\maketitle

\vspace{.3cm}

\begin{abstract}
 Mechanical components that are exposed to cyclic mechanical loading fail at loads that are well below the ultimate tensile strength. This process is known as fatigue. The failure time, that is the time when a first crack forms, is highly random. In this work we review some recent developments in the modelling of probabilistic failure times, understood as the time to the formation of a fatigue crack.

We also discuss the how probabilistic models can be used in shape design with the design intent of optimizing the component's reliability. We give review a recent existence result for optimal shapes and we discuss continuous and discrete shape derivatives. Another application is optimal service scheduling. The mathematical fields involved range from reliability statistics over stochastic point processes, multiscale modeling, PDEs on variable geometries, shape optimization and numerical analysis to operations research.   
\end{abstract}

\noindent{\bf Key Words:} Stochastic Failure Time Processes, Minimization of Failure by Shape Optimization, Optimal Service Intervals

\noindent {\bf MSC (2010):} {62N05, 49Q10, 65N75, 90B25}


\section{\label{sec:Intro}Introduction}

If the mechanical failure of a component would be predictable, everybody would use a mechanical component just until the end of this predicted life time. It would fall apart during the very next use cycle due to a crack which develops exactly at its weakest point. There would be no safety issues related to  reliability, because everybody knows, when 'it' will happen. At this point it is already clear that this is not the actual state of our world. Chance plays a major role in whether or not a mechanical component can survive a predefined load history. Probabilistic models of failure times thus provide a more realistic description of reality, than deterministic life prediction does.  

In this work, we review a model of probabilistic life prediction for mechanical components under cyclic loading that has recently been developed \cite{Hanno,SBKRSG,Schmitz}, tested \cite{SBKRSG}, numerically implemented and applied to gas turbine engineering \cite{SSGBRK,SSBGRK2}. We also review the foundations of the model in materials science and give an insight into the microscopic origins of the scatter in life time. As these origins heavily rely on the material and the damage mechanism considered, we here restrict to polycrystalline metal and low cycle fatigue (LCF).   Note that, even under controlled lab conditions, the scatter in LCF life -- defined as the number of load cycles to crack initiation -- is about one order of magnitude \cite{Erm}.

Probabilistic models of failure have been studied since the pioneering work of Weibull \cite{Wei}. In the context of ceramics, the probabilistic approach has become widely used \cite{Fett}. For metals and LCF the approach proposed by two of the authors in collaboration with material scientists and gas turbine engineers is new \cite{Hanno,SSBGRK2}, for an alternative approach see \cite{Vorm}.  We review the probabilistic model for LCF in Section 2.1 from a general prospective based on point processes and their associated first failure times. 

Interestingly, the probabilistic model for LCF, but also the ceramics based models \cite{BGS}, have a number of interesting implications. Choosing the form of a component can be seen as the choice of a volume $\Omega\subseteq \R{3}$ that is filled with material.  If we are able to associate a probability law for the failure time distribution for each admissible shape $\Omega$, we can ask, which of the shapes leads to the most reliable component. It turns out that there are at least three different notions \cite{BG}, what optimal reliability could actually mean: A  vendor might want to minimize the warranty cost until the warranty time $t^*$ and might not care, if the components falls apart shortly afterwards. Contrary to such design to life optimizations, a vendor who is more interested in the customer's benefit might like to optimize the reliability up to any given time. This would mean that the probability of failure for her or his components is less than for any other design alternative. Such an approach however does, at least theoretically, not exclude designs where the risk of failure is concentrated in short periods. During the short periods of elevated risk people might be exposed to unethical hazards. Optimal reliability might thus be also understood as the minimization of instantaneous hazard at any time. We review these notions in Section \ref{sec:Basics}. Fortunately we are able to identify some situations, where the three given notions of optimal reliability do coincide.  In particular this is true for proportional hazard models related to the Cox-process, one of the work horses of survival analysis \cite{EscobarMeeker}.

Section \ref{sec:ProbMod} is closed with the introduction of the local probabilistic model for LCF in Section 2.3. Besides the original probabilistic local Weibull model \cite{Hanno,SSBGRK2,Schmitz}, we here propose a new and unpublished variant based on Gompert's law of exponential hazard. While the deterministic life function is used as a scale variable in the Weibull model, it is a location variable in the Gompert's for LCF. We also show that both models fall under the proportional hazard paradigm and thus can not be used for design to life activities. For the Weibull model, this has already been observed in \cite{BG}.    

In Section \ref{sec:FEA} we discuss the actual calculation of failure probabilities for complex geometries using finite elements. As a real world application, we provide the example of a 3D Turbo charger.

Section \ref{sec:MultiScale} gives insight in ongoing research on micro mechanical models for the probabilistic life calculation. In particular we present some new numerical results on the distribution of Schmid factors in the case of muliaxial stress, see \cite{GSSKRB} for some prior results for uniaxial stress states. We also give a brief outline, how these micro models can be integrated in the macroscopic probabilistic life time description.    

 In the following we deal with the quest of optimal reliability. Section \ref{sec:OpShapes} translates the problem of optimal reliability to a problem of shape optimization \cite{ShapeOpt,BucBut05,SokZol92}. So the first question is the existence of optimal shapes. A number of results have been obtained in this direction \cite{Chen75,Fujii,Epp07,Eppler_Unger97,ShapeOpt,BGS}, but the optimal reliability has a quite singular shape functional and therefore does not fit into the existing framework of weak $H^ 1$-solutions. We therefore follow the approach in \cite{Hanno,BG,Schmitz} and use elliptic regularity theory in order to prove compactness results on the graph of the optimization  problem that are needed in the existence proof \cite{ShapeOpt}.

In Section 5 we give an outlook on the theory of continuous shape derivatives for objective functionals \cite{SokZol92,ShapeOpt,Schmitz} that stem from our probabilistic fatigue models. Despite the rather singular nature of the objective functionals, one can once again apply elliptic regularity to prove existence. However, the adjoint equation has some surprising features and has to be interpreted in a distributional sense. We also give a short comment on first order optimality conditions, see also \cite{BGS,Schmitz}.

Another application of the probabilistic models for LCF is given in Section \ref{sec:RiskManagement}, where, based on the knowledge of fatigue risk figures and economic impact, service plans are valutated economically and optimal service intervals are determined.

Finally, we draw some conclusions and give an outlook to future research work in Section \ref{sec:Conclusion}.

\section{\label{sec:ProbMod} Probabilistic Models for Fatigue Crack Initiation}

\subsection{Probabilistic Failure Modeled with Point Processes}
Let $\Omega\subseteq \R{3}$ be a bounded, open domain that represents the shape of a mechanical component, i.e. the region filled with matter and let $\partial\Omega=\overline{\Omega}\setminus \Omega$ be its boundary. We consider a crack as an event that happens at some time $t\in\R{+}=[0,\infty)$ in some location $x\in\overline{\Omega}$. The configuration space for crack initiations thus is $\mathcal{C}=\overline{\Omega}\times\R{+}$. By $\mathcal{R}(\mathcal{C})$ we denote the set of Radon measures on $\mathcal{C}$, i.e. the set of $\sigma$-finite measures such that compact sets have finite measure. A radon measure $\rho$ is atom free, if $\rho(\{c\})=0$ for all $c\in\mathcal{C}$. 

Let $\mathcal{R}_c(\mathcal{C})\subset \mathcal{R}(\mathcal{C})$ the counting measures on $\mathcal{C}$ that associate natural numbers to measurable regions in $\mathcal{C}$. We note that for $\gamma\in \mathcal{R}_c(\mathcal{C})$, there exists a unique representation $\gamma\restriction_{\mathcal{C}_t}=\sum_{j=1}^nb_j\delta_{c_j}$, with $c_j=(x_j,t_j)\in \mathcal{C}_t=\overline{\Omega}\times[0,t]$, $b_j\in\mathbb{N}$. $\gamma$ is called simple if $b_j=1$ for all $j=1,\ldots,n$ and all $t$. Here $\delta_c$ is the Dirac measure with mass one in $c\in\mathcal{C}$.

\begin{defn}\label{def:CIHistory}\textbf{(Crack Initiation History)}
\begin{itemize}
\item[(i)] A crack initiation history is a simple radon counting measure $\gamma \in\mathcal{R}_c(\mathcal{C})$. $\gamma(C)\in\mathbb{N}_0$ stands for the number of cracks initiated in some measurable set of configurations $C\subseteq\mathcal{C}$.
\item[(ii)] $\tau(\gamma)=\inf\{ t\geq 0:\gamma(\mathcal{C}_t)>0\}$ is the failure time associated with the crack initiation history $\gamma$.  
\end{itemize}

\end{defn}

In general, the formation of a crack is a random event. Thus also crack initiation histories have to be random. This connects technical failure with point processes:

\begin{defn} \label{def:PP}\textbf{(Point Processes)}
\begin{itemize}
\item[(i)] Let $(\mathscr{X},\mathscr{A},P)$ be a probability space and let $\mathcal{R}_c(\mathcal{C})$ be endowed with the sigma algebra $\mathcal{B}$ generated by $\gamma \to \int_{\mathcal{C}}f\,d\gamma$, where $f\in C_0(\mathcal{C})$.  Then a measurable map $\gamma:\Omega\to \mathcal{R}_c(\mathcal{C})$ is called a point process.
\item[(ii)] A point process is called simple, if its realizations $\gamma(\omega)$ are simple $P$ almost surely.
\item[(iii)] A point process is non-atomic, if for all $c\in \mathcal{C}$, $P(\gamma(\{c\})>0)=0$ holds.
\item[(iv)] A point process $\gamma$ has independent increments, if, for all measurable and mutually disjoint sets of configurations $C_1,\ldots,C_n\subseteq \mathcal{C}$, $\gamma(C_1),\ldots,\gamma(C_n):\mathscr{X}\to\mathbb{N}_0\cup\{\infty\}$ are independent random variables.  
\end{itemize}

\end{defn} 

For a crack initiation process, that is a random crack initiation history, the above properties (ii) and (iii) immediately make sense: Simplicity corresponds to the fact that no two cracks can initiate at one place at the same time, whereas non atomic crack processes do not have distinguished points and times in the continuum, where cracks nucleate with  probability larger than zero.  Property (iv) is a reasonable assumption, if one considers the initial phase where cracks have not grown to a size where they significantly influence each other.

\begin{defn} \textbf{(Crack Initiation Process)}\\
A crack initiation process is a simple, non atomic point process on ${\cal C}_t=\overline{\Omega}\times\R{+}$. The associated time of first failure is $\tau=\tau(\gamma)$.
\end{defn}

The following theorem is due to S. Watanabe \cite{Watanabe}, see also \cite{Kallenberg}.

\begin{prop} \label{prop:classPPP}\textbf{(Classification of the Poisson Point Process)}\\
A simple, non atomic point process with independent increments $\gamma$ is a Poisson point process (PPP). That is, there exists a atom free Radon measure $\rho\in \mathcal{R}(\mathcal{C})$, called the intensity measure, such that $P(\gamma(C)=n)=e^{-\rho(C)}\frac{\rho(C)^n}{n!}$ for all measurable $C\subseteq\mathcal{C}$.

Conversely, a PPP with a non atomic intensity measure $\rho$ is non atomic, simple and has independent increments.
\end{prop}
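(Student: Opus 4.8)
The plan is to reconstruct the intensity measure from the \emph{void probabilities} $\varphi(C):=P(\gamma(C)=0)$ and then to identify the one-dimensional marginals of $\gamma$ by a law-of-rare-events argument. First I would record two structural facts about $\varphi$. Since for disjoint bounded $C_1,C_2$ one has $\gamma(C_1\cup C_2)=\gamma(C_1)+\gamma(C_2)$ with nonnegative summands, independence of increments yields the multiplicativity $\varphi(C_1\cup C_2)=\varphi(C_1)\varphi(C_2)$; and since $\{\gamma(\bigcup_{n\le N}C_n)=0\}$ decreases to $\{\gamma(\bigcup_nC_n)=0\}$ for disjoint $C_n$, continuity of $P$ gives $\varphi(\bigcup_{n\le N}C_n)\to\varphi(\bigcup_nC_n)$. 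Next I would show that $\varphi(K)>0$ for every compact $K$: if $\varphi(K)=0$, repeated dyadic subdivision together with multiplicativity over finite partitions produces a nested sequence of cells on which the void probability vanishes, whose intersection is a single point $c$; continuity from above of the (a.s.\ finite, since $\gamma\in\mathcal{R}_c(\mathcal{C})$) measure $\gamma(\,\cdot\,)$ then forces $P(\gamma(\{c\})\ge1)=1$, contradicting non-atomicity. Hence $\rho(C):=-\log\varphi(C)$ is a well-defined, nonnegative set function that is finitely additive by multiplicativity, continuous from below by the limit noted above, therefore countably additive on bounded sets, and finite on compacta; it thus extends to a Radon measure $\rho\in\mathcal{R}(\mathcal{C})$. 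Atom-freeness is immediate from the hypothesis, since $\rho(\{c\})=-\log P(\gamma(\{c\})=0)=-\log1=0$.

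The heart of the forward direction is to show $\gamma(C)\sim\mathrm{Poisson}(\rho(C))$ for bounded measurable $C$. Fix such a $C$ and, for each $n$, partition it into cells $C_{n,1},\dots,C_{n,m_n}$ obtained by intersecting $C$ with half-open dyadic cubes of side $2^{-n}$. Since $\rho\restriction_{\overline{C}}$ is a finite atom-free measure on a compact set, a routine compactness argument shows $\max_i\rho(C_{n,i})\to0$, while $\sum_i\rho(C_{n,i})=\rho(C)$ by additivity. Set $X_{n,i}:=\mathbf{1}_{\{\gamma(C_{n,i})\ge1\}}$; by independence of increments the $X_{n,i}$ are independent Bernoulli variables with parameters $p_{n,i}=1-e^{-\rho(C_{n,i})}$, and from $x-\tfrac12x^2\le 1-e^{-x}\le x$ together with $\sum_i\rho(C_{n,i})^2\le\rho(C)\max_i\rho(C_{n,i})$ one gets $\max_ip_{n,i}\to0$ and $\sum_ip_{n,i}\to\rho(C)$. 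On the other hand, on the full-measure event that $\gamma$ is simple and finite on $C$, the restriction $\gamma\restriction_C$ is a finite sum of distinct unit Dirac masses, so once the cell diameters drop below the positive minimal distance between these Diracs one has $\gamma(C_{n,i})\in\{0,1\}$ for every $i$, hence $\sum_iX_{n,i}=\gamma(C)$; thus $\sum_iX_{n,i}\to\gamma(C)$ $P$-a.s., in particular in distribution. The classical Poisson limit theorem for triangular arrays of independent Bernoulli variables now gives $\sum_iX_{n,i}\Rightarrow\mathrm{Poisson}(\rho(C))$, whence $\gamma(C)\sim\mathrm{Poisson}(\rho(C))$; together with the assumed independence of increments this is precisely the defining property of a PPP with intensity $\rho$.

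For the converse, let $\gamma$ be a PPP with non-atomic Radon intensity $\rho$; independence of increments is part of the definition of a PPP. Non-atomicity of $\gamma$ follows at once, since $P(\gamma(\{c\})>0)=1-e^{-\rho(\{c\})}=0$. To see that $\gamma$ is simple it suffices, by $\sigma$-compactness of $\mathcal{C}=\overline{\Omega}\times\R{+}$, to rule out double atoms inside an arbitrary compact $K$. Partitioning $K$ into half-open dyadic cells $D_{n,i}$ of side $2^{-n}$, any double atom in $K$ lies in a single cell for every $n$, so $P(\gamma\restriction_K\text{ has a double atom})\le\sum_iP(\gamma(D_{n,i})\ge2)\le\tfrac12\sum_i\rho(D_{n,i})^2\le\tfrac12\,\rho(K)\max_i\rho(D_{n,i})\xrightarrow[n\to\infty]{}0$, using $1-e^{-x}-xe^{-x}\le\tfrac12x^2$ and, once more, $\max_i\rho(D_{n,i})\to0$ by atom-freeness. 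Hence $\gamma$ is $P$-a.s.\ simple.

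The main obstacle is the forward direction, and within it two points need care. The first is upgrading $\rho$ from a merely finitely additive set function to a genuine locally finite, atom-free measure — this is exactly what the estimate $\varphi(K)>0$ and the subdivision argument provide. The second, and the real crux, is making the approximation $\gamma(C)\approx\sum_iX_{n,i}$ rigorous: simplicity is precisely what legitimises replacing $\gamma(C_{n,i})$ by its indicator, while atom-freeness of $\rho$ is precisely what forces $\max_ip_{n,i}\to0$ so that the law of small numbers applies. Coordinating these two properties through one and the same dyadic refinement, and controlling the error terms, is the technical core; the converse, by contrast, is a short computation once the refinement idea is available.
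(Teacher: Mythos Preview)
The paper does not actually prove this proposition: it is stated as a classical result due to Watanabe, with a reference to Kallenberg, and is then used as a black box to derive Corollary~\ref{cor:FailureTimeDistribution}. So there is no ``paper's own proof'' to compare against.

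That said, your argument is correct and is essentially the standard proof one finds in the cited sources. The reconstruction of the intensity via $\rho(C)=-\log P(\gamma(C)=0)$, the nested-cell contradiction to secure $\varphi(K)>0$ on compacta, and the law-of-rare-events identification of the one-dimensional marginals are exactly the ingredients Kallenberg uses; your write-up organises them cleanly. Two small points worth tightening: first, when you assert $\max_i\rho(C_{n,i})\to0$ ``by a routine compactness argument,'' spell it out --- pick a cell at each level with $\rho\ge\varepsilon$, extract a limit point of their centres, and conclude that point is an atom of $\rho$; the argument is short but not entirely routine. Second, your converse assumes that independence of increments is part of the \emph{definition} of a PPP, whereas the ``that is'' clause in the proposition only mentions the Poisson marginals; this is a harmless ambiguity in the statement (the standard definition does include independence), but it would be cleaner to flag it explicitly.
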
 

This implies the following:

\begin{cor}\label{cor:FailureTimeDistribution}\textbf{(First Failure Time Distribution)}\\
Let $\gamma$ be a crack initiation process with independent increments. 

\begin{itemize}
\item[(i)] Then, the cumulative distribution function of the first failure time $\tau=\tau(\gamma)$ is
$
F_\tau(t)=1-e^ {-\rho(\mathcal{C}_t)}$, $t\in\R{+}$,
where $\rho$ is the intensity measure of the $PPP$ $\gamma$.
\item[(ii)] Suppose that $H(t)=\rho(\mathcal{C}_t)$ is differentiable, then $h(t)=H'(t)$ is the Hazard rate, $h(t)=\lim_{\Delta\searrow 0}\frac{1}{\Delta}P(\tau\in [t,t+\Delta]|\tau>t)$.
\end{itemize}
\end{cor}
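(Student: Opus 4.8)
The plan is to derive both parts directly from Proposition \ref{prop:classPPP} together with Definition \ref{def:CIHistory}(ii). Since a crack initiation process is by definition simple and non atomic, and we are additionally given independent increments, Proposition \ref{prop:classPPP} guarantees that $\gamma$ is a PPP with an atom free intensity measure $\rho\in\mathcal{R}(\mathcal{C})$. Hence for every measurable $C\subseteq\mathcal{C}$ one has $P(\gamma(C)=n)=e^{-\rho(C)}\rho(C)^n/n!$, and in particular $P(\gamma(C)=0)=e^{-\rho(C)}$.

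For part (i), I would simply observe that the events $\{\tau>t\}$ and $\{\gamma(\mathcal{C}_t)=0\}$ coincide. Indeed, by Definition \ref{def:CIHistory}(ii), $\tau(\gamma)=\inf\{s\geq 0:\gamma(\mathcal{C}_s)>0\}$, and since $s\mapsto\gamma(\mathcal{C}_s)$ is nondecreasing and $\mathbb{N}_0$-valued with $\mathcal{C}_t=\overline{\Omega}\times[0,t]$, we have $\gamma(\mathcal{C}_t)=0$ precisely when no crack has been initiated up to time $t$, i.e. when $\tau>t$. Applying the Poisson formula with $C=\mathcal{C}_t$ and $n=0$ gives $P(\tau>t)=e^{-\rho(\mathcal{C}_t)}$, hence $F_\tau(t)=P(\tau\leq t)=1-e^{-\rho(\mathcal{C}_t)}$ for all $t\in\R{+}$, which is the claim.

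For part (ii), assuming $H(t)=\rho(\mathcal{C}_t)$ is differentiable with $h=H'$, I would compute the conditional probability in the definition of the hazard rate. Using part (i), for $\Delta>0$ small, $P(\tau\in[t,t+\Delta]\mid\tau>t)=\frac{P(t<\tau\leq t+\Delta)}{P(\tau>t)}=\frac{e^{-H(t)}-e^{-H(t+\Delta)}}{e^{-H(t)}}=1-e^{-(H(t+\Delta)-H(t))}$. Dividing by $\Delta$ and letting $\Delta\searrow 0$, the increment $H(t+\Delta)-H(t)$ tends to $0$, so a first-order Taylor expansion of $1-e^{-u}$ around $u=0$ gives $\lim_{\Delta\searrow 0}\frac{1}{\Delta}\bigl(1-e^{-(H(t+\Delta)-H(t))}\bigr)=\lim_{\Delta\searrow 0}\frac{H(t+\Delta)-H(t)}{\Delta}=H'(t)=h(t)$.

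The only genuinely delicate point is the measure-theoretic identity $\{\tau>t\}=\{\gamma(\mathcal{C}_t)=0\}$ and, relatedly, the measurability of $\tau$; here one should note that because the paths $s\mapsto\gamma(\mathcal{C}_s)$ are right-continuous and monotone (each realization being a locally finite sum of Dirac masses), the infimum defining $\tau$ is attained in the sense that $\gamma(\mathcal{C}_t)>0$ already for $t=\tau$ when $\tau<\infty$, so there is no discrepancy between $\{\tau\leq t\}$ and $\{\gamma(\mathcal{C}_t)>0\}$ at the endpoint. Once this is settled, everything else is a one-line application of the Poisson law and an elementary limit; I do not anticipate any further obstacle.
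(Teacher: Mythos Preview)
Your proof is correct and follows essentially the same approach as the paper: for (i) you identify $\{\tau>t\}=\{\gamma(\mathcal{C}_t)=0\}$ and apply the Poisson law from Proposition~\ref{prop:classPPP}, exactly as the paper does, and for (ii) you compute the hazard-rate limit directly whereas the paper simply invokes the standard representation $S_\tau(t)=e^{-\int_0^t h(s)\,ds}$ from \cite{EscobarMeeker}. The extra care you take with the endpoint identity $\{\tau\leq t\}=\{\gamma(\mathcal{C}_t)>0\}$ is a nice touch that the paper glosses over.
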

\begin{proof}
(i) By Prop. \ref{prop:classPPP}, $\gamma$ is a PPP with intensity measure $\rho\in\mathcal{R}(\mathcal{C})$. The event $\{\tau\leq t\}$ is equal to $\{\gamma(\mathcal{C}_t)>0\}$ and thus $F_\tau(t)=P(\tau\leq t)= P(\gamma(\mathcal{C}_t)>0)=1-P(\gamma(\mathcal{C}_t)=0)=1-e^ {-\rho(\mathcal{C}_t)}$.

(ii) This follows from the representation of the survival function $S_\tau(t)=1-F_\tau(t)=e^{-\int_0^th(\tau)d\tau}$ where $h(t)$ is the Hazard rate, see \cite{EscobarMeeker}.  
\end{proof}

\subsection{Optimal Reliability}

We now pass on to the problem of design, which is the choice of one form $\Omega$ from a set of design alternatives $\mathcal{O}$, called the set of admissible shapes. As the avoidance of failure is one important design objective, we can ask for an optimal design with respect to this design criterion:

\begin{defn} \label{def:OptRel} \textbf{(Optimal Reliability)}\\
Let $\mathcal{O}$ be some set of admissible shapes $\Omega \subset \R{3}$. Suppose that for $\Omega\in\mathcal{O}$ there is a crack initiation process $\gamma_\Omega:(\Xi,\mathcal{A},P)\to (\mathcal{R}_c,\mathcal{B})$ with associated first failure time $\tau_\Omega$. Then there exist several options to formulate the problem of optimal reliability:
\begin{itemize}
\item[(i)] $\Omega^*\in\mathcal{O}$ fulfills optimal reliability at time $t\in(0,\infty)$, if $F_{\tau_{\Omega^*}}(t)\leq F_{\tau_{\Omega}}(t)$ for all $\Omega\in\mathcal{O}$.  
\item[(ii)] $\Omega^*\in\mathcal{O}$ fulfills optimal reliability in first stochastic order,  if $F_{\tau_{\Omega^*}}(t)\leq F_{\tau_{\Omega}}(t)$ for all $\Omega\in\mathcal{O}$ and $t\in\R{+}$.
\item[(iii)]  $\Omega^*\in\mathcal{O}$ fulfills optimal reliability in terms of hazard,  if the hazard rate $h_{\tau_{\Omega}}(t)$ exists for all $\Omega\in\mathcal{O}$ and $h_{\tau_{\Omega^*}}(t)\leq h_{\tau_{\Omega}}(t)$ for all $\Omega\in\mathcal{O}$ and $t\in\R{+}$. 
\end{itemize}
\end{defn}
As one easily sees, in Def. \ref{def:OptRel} we have (iii) $\Rightarrow$(ii) $\Rightarrow$ (i). However, in some situations also the opposite implication holds. We then speak of a no design to life situation, since then it is not possible to optimize reliability with respect to some warranty time $t$.  The weak no design to life condition is the equivalence of (i) and (ii). This means, that for any time horizon $t>0$ it is more probable that design $\Omega$ fails until $t$ than the failure of $\Omega^*$, provided this is true for some time $t^*$ (e.g. the warranty time). The equivalence of (iii) and (i) even means that, provided the operation of $\Omega^*$ over the time span $(0,t^*)$ is more reliable than the operation  of $\Omega$, then $\Omega^*$ will produce an failure with a lesser likelihood than $\Omega$ at and instant of time $t>0$. In other words $\Omega^*$ is always safer than $\Omega$, provided this is true over some time span.  

\begin{prop} \label{prop:NoDesignToLife} \textbf{(Sufficient Conditions for No Design To Life)}\\
Let a failure time model $\tau_\Omega$ be given as described above. 
\begin{itemize}
\item[(i)] Suppose that the $\Omega$ dependence of the failure time is given by a scale variable $\eta_\Omega$ i.e. there exists a strictly monotonic distribution function $F_0:\R{+}\to[0,1]$ such that $F_{\tau_\Omega}:\R{+}\to[0,1]$ is given by $F_{\tau_\Omega}=F_0\left(\frac{t}{\eta_\Omega}\right)$.    Then Def. \ref{def:OptRel} (i) and (ii) are both equivalent to $\eta_{\Omega^*}\geq \eta_\Omega$ $\forall \Omega\in\mathcal{O}$. 
\item[(ii)] If in addition to the assumptions in (i) the hazard rates $h_0(t)$ exists for $F_0(t)$ and if $h_0(t)$ is increasing, then also condition (iii) in Def.\ \ref{def:OptRel} is equivalent to $\eta_{\Omega^*}\geq \eta_\Omega$ $\forall \Omega\in\mathcal{O}$.
\item[(iii)] Suppose that the $\Omega$ dependence is given by a variable $C_\Omega$ such that there exists a  positive baseline hazard function $h_0(t)$ of $F_0$, $t\in(0,\infty)$, where $h_{\tau_\Omega}(t)=C_\Omega h_0(t)$ holds. Then, the conditions (i)--(iii) of Def. \ref{def:OptRel} are equivalent to $J_{\Omega^*}\leq C_\Omega$ $\forall \Omega\in\mathcal{O}$.
\end{itemize}
\end{prop}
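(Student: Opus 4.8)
The plan is to use the explicit functional forms of $F_{\tau_\Omega}$ and to reduce all three parts to elementary monotonicity arguments. Throughout I would use that the chain (iii)$\Rightarrow$(ii)$\Rightarrow$(i) of Def.\ \ref{def:OptRel} always holds, so that in each case it suffices to close the loop: to show that (i), assumed only at a single time $t^*>0$, already forces the claimed inequality for the $\Omega$-parameter, and that this inequality in turn implies (iii) (respectively (ii) in part (i)).

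For part (i), since a distribution function is nondecreasing, $\eta_{\Omega^*}\geq\eta_\Omega$ gives $t/\eta_{\Omega^*}\leq t/\eta_\Omega$ and hence $F_{\tau_{\Omega^*}}(t)=F_0(t/\eta_{\Omega^*})\leq F_0(t/\eta_\Omega)=F_{\tau_\Omega}(t)$ for every $t\in\R{+}$, which is (ii), and a fortiori (i). Conversely, if (i) holds at some $t^*>0$, then $F_0(t^*/\eta_{\Omega^*})\leq F_0(t^*/\eta_\Omega)$, and strict monotonicity (hence injectivity) of $F_0$ yields $t^*/\eta_{\Omega^*}\leq t^*/\eta_\Omega$, i.e.\ $\eta_{\Omega^*}\geq\eta_\Omega$. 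This settles part (i).

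For part (ii) it remains to show $\eta_{\Omega^*}\geq\eta_\Omega\Rightarrow$ (iii). I would compute the hazard rate of the scale family: from $F_{\tau_\Omega}(t)=F_0(t/\eta_\Omega)$ one gets the density $f_{\tau_\Omega}(t)=\eta_\Omega^{-1}f_0(t/\eta_\Omega)$, hence $h_{\tau_\Omega}(t)=f_{\tau_\Omega}(t)/(1-F_{\tau_\Omega}(t))=\eta_\Omega^{-1}h_0(t/\eta_\Omega)$. If $\eta_{\Omega^*}\geq\eta_\Omega$ then $\eta_{\Omega^*}^{-1}\leq\eta_\Omega^{-1}$ and $t/\eta_{\Omega^*}\leq t/\eta_\Omega$; since $h_0\geq0$ is increasing, both factors on the right decrease, so $h_{\tau_{\Omega^*}}(t)\leq h_{\tau_\Omega}(t)$ for all $t$, which is (iii). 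Together with part (i) this proves part (ii).

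For part (iii), integrating the proportional-hazard relation gives the cumulative hazard $H_{\tau_\Omega}(t)=\int_0^t h_{\tau_\Omega}(s)\,ds=C_\Omega H_0(t)$, so that $F_{\tau_\Omega}(t)=1-e^{-C_\Omega H_0(t)}=1-S_0(t)^{C_\Omega}$ by Cor.\ \ref{cor:FailureTimeDistribution}, with $S_0=1-F_0$. Since $h_0(t)>0$ on $(0,\infty)$, the pointwise inequality $C_{\Omega^*}h_0(t)=h_{\tau_{\Omega^*}}(t)\leq h_{\tau_\Omega}(t)=C_\Omega h_0(t)$ is equivalent to $C_{\Omega^*}\leq C_\Omega$ (reading $C_{\Omega^*}$ for the quantity written $J_{\Omega^*}$ in the statement), giving (iii)$\Leftrightarrow C_{\Omega^*}\leq C_\Omega$; and if (i) holds at some $t^*>0$, then $e^{-C_{\Omega^*}H_0(t^*)}\geq e^{-C_\Omega H_0(t^*)}$ with $H_0(t^*)=\int_0^{t^*}h_0(s)\,ds>0$, whence $C_{\Omega^*}\leq C_\Omega$. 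Combined with the always-valid chain (iii)$\Rightarrow$(ii)$\Rightarrow$(i), all three conditions of Def.\ \ref{def:OptRel} are then equivalent to $C_{\Omega^*}\leq C_\Omega$. The arguments are essentially routine; I expect the only \emph{delicate points} to be the roles of the hypotheses — strict monotonicity of $F_0$ for the injectivity step in (i), monotonicity of $h_0$ so that both factors of $\eta_\Omega^{-1}h_0(t/\eta_\Omega)$ move the same way in (ii), and positivity of $h_0$ (equivalently $H_0(t)>0$ for $t>0$) so that (i) at a single positive time pins down the ordering of the $C_\Omega$ in (iii) — together with the implicit differentiability needed to speak of densities and hazard rates.
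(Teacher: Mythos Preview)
Your proposal is correct and follows essentially the same route as the paper: in each part you close the loop (i)$\Rightarrow$parameter inequality$\Rightarrow$(iii) (resp.\ (ii)) using exactly the same ingredients --- strict monotonicity of $F_0$ in (i), the scale-family hazard formula $h_{\tau_\Omega}(t)=\eta_\Omega^{-1}h_0(t/\eta_\Omega)$ together with monotonicity of $h_0$ in (ii), and positivity of $\int_0^{t^*}h_0>0$ in (iii). The only cosmetic difference is that you obtain the hazard formula in (ii) via the density $f_{\tau_\Omega}$ whereas the paper reads it off from $S_{\tau_\Omega}(t)=\exp\{-\int_0^{t/\eta_\Omega}h_0\}$; your identification of $J_{\Omega^*}$ with $C_{\Omega^*}$ is also the intended reading.
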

\begin{proof}
(i) This is an immediate consequence of the fact that $F_0(t)$ is strictly monotonically increasing in $t$. In fact, $F_0\left(\frac{t}{\eta_{\Omega^ *}}\right) \leq F_0\left(\frac{t}{\eta_{\Omega^ *}}\right)$ for one $t\in \R{+}$ is equivalent to $\eta_{\Omega^ *}\geq \eta_\Omega$. 

(ii) For $\Omega\in\mathcal{O}$ we get 
\begin{equation}
S_{\tau_\Omega}(t)=\exp\left\{\int_0^{t/\eta_\Omega} h_{0}(\tau)\,d\tau\right\}\mbox{ and thus } h_{\tau_\Omega}(t)=h_0\left(\frac{t}{\eta_\Omega}\right)\frac{1}{\eta_\Omega}.
\end{equation}
Now suppose that $F_{\tau_{\Omega^*}}(t)\leq F_{\tau_\Omega}(t)$, then, by the strict positivity of $h_0$ this is equivalent to $\frac{t}{\eta_{\Omega^*}}\leq \frac{t}{\eta_{\Omega}}$ which is equivalent to $\eta_{\Omega^*}\geq \eta_\Omega$.  But then, as $h_0(t)$ is monotonically increasing, 
\begin{equation}
h_{\tau_{\Omega^*}}(t)=h_0\left(\frac{t}{\eta_{\Omega^*}}\right)\frac{1}{\eta_{\Omega^*}}\leq h_0\left(\frac{t}{\eta_\Omega}\right)\frac{1}{\eta_\Omega}=h_{\tau_{\Omega}}(t)
\end{equation}
holds for all $t\geq 0$.

(iii) Suppose holds Def. \ref{def:OptRel} (i)  holds for some $t$. Then,
\begin{equation}
 J_{\Omega^*}\int_0^th_0(\tau)\,d\tau \leq J_\Omega\int_0^th_0(\tau)\,d\tau.
 \end{equation}
 As $h_0$ is positive, $J_{\Omega^*}\leq J_\Omega$ follows. But this implies that $h_{\tau_{\Omega^*}}(t)=  J_{\Omega^*}  h_0(t)\leq J_\Omega h_0(t)=h_{\tau_{\Omega}}(t)$ for arbitrary $t>0$, which is condition Def. \ref{def:OptRel} (iii).
\end{proof}

The situation described in point (iii) of Prop. \ref{prop:NoDesignToLife} is related to the proportional hazard approach of the Cox model \cite{EscobarMeeker}. 

The above assumption of a (strictly)  increasing baseline hazard rate $h_0(t)$ is essential for our conception of fatigue. If $h_0(t)$ and thus $h_{\tau_\Omega}(t)$ would not increase in time $t$, the component would gain fitness while being used. Except for short initial periods of infancy mortality, this is contrary to the general experience that technical devices break more easily, if they are aged.

\subsection{Probabilistic Models for Fatigue Cracking}

We now shortly review some models which have been discussed in \cite{Hanno,BG} in all detail.  Let $\sigma:\Omega \to \R{3\times 3}$ be a stress tensor field associated with the form $\Omega\in\mathcal{O}$. Here we model $\sigma$ in the context of linear elasticity \cite{Cia1988}. Thus, $\sigma(u)=\lambda \tr(\nabla u) I+\mu(\nabla u+\nabla u^T)$ for the L\'ame constants $\lambda,\mu>0$ and $I$ the $3\times 3$ unit matrix. $u=u(\Omega)$ is the displacement field, which solves the elliptic system of PDEs
\begin{align}
\label{eqa:ElasticityStrong}
\begin{split}
-\nabla \cdot \sigma(u)=f(\Omega) & \mbox{ on }\Omega\\
\sigma(u)\nu=g(\Omega) & \mbox{ on }\partial \Omega_N\\
u=0 & \mbox{ on }\partial \Omega_D.\\ 
\end{split}
\end{align} 
Here $f_\Omega:\Omega\to \R{3}$ is the volume force density and $g_\Omega:\partial\Omega\to \R{3}$ is the surface force density. $\partial\Omega$ is decomposed into a Dirichlet part with non zero surface volume $\partial\Omega_D$ and a part with natural boundary conditions $\Omega_N$. $\nu:\partial\Omega\to\R{3}$ is the outward normal vector field. For the existence and uniqueness of (weak) solutions see \cite{Cia1988}.

Based on a weak solution $u\in [H^1(\Omega)]^3$ and the associated stress field $\sigma(u)$, one considers the situation where the loads switch from $f_{1,\Omega}$ and $g_{1,\Omega}$ to $f_{2,\Omega}$ and $g_{2,\Omega}$ periodically. 
This of course also leads to a cyclic behavior of the stress tensor $\sigma$ that oscillates  between $\sigma_1=\sigma(u_1)$ and $\sigma_2=\sigma(u_2)$.  The stress amplitude of this cyclic motion is $\sigma_a=\frac{1}{2}(\sigma(u_1)-\sigma(u_2))$. Note that by the linearity of \eqref{eqa:ElasticityStrong}, $\sigma_a=\frac{1}{2}\sigma(u)$ with $u$ solves \eqref{eqa:ElasticityStrong} for $f_\Omega=f_{1,\Omega}-f_{2,\Omega}$ and $g_\Omega=g_{1,\Omega}-g_{2,\Omega}$.

Let $\sigma'_a=\sigma_a-\frac{1}{3}\tr(\sigma_a)I$ be the trace free part of $\sigma_a$. Then, the eleastic von Mises amplitude  stress is defined by $\sigma_{a}^{\rm el}=\left(\frac{3}{2}\sigma_a':\sigma_a'\right)^{\frac{1}{2}}$, where $a:b=\sum_{i,j=1}^3a_{i,j}b_{i,j}$ is a scalar quantity for $a,b\in\R{3\times 3}$. 

When undergoing a load cycle described by a elastic-plastic stress amplitude $\sigma_a^{\rm el-pl}$, the material locally travels through a displays a stress strain hysteresis \ref{fig:StressStrain}, as displayed in Figure \ref{fig:StressStrain} (a).

\begin{figure}[t]
\includegraphics[width=.35\textwidth]{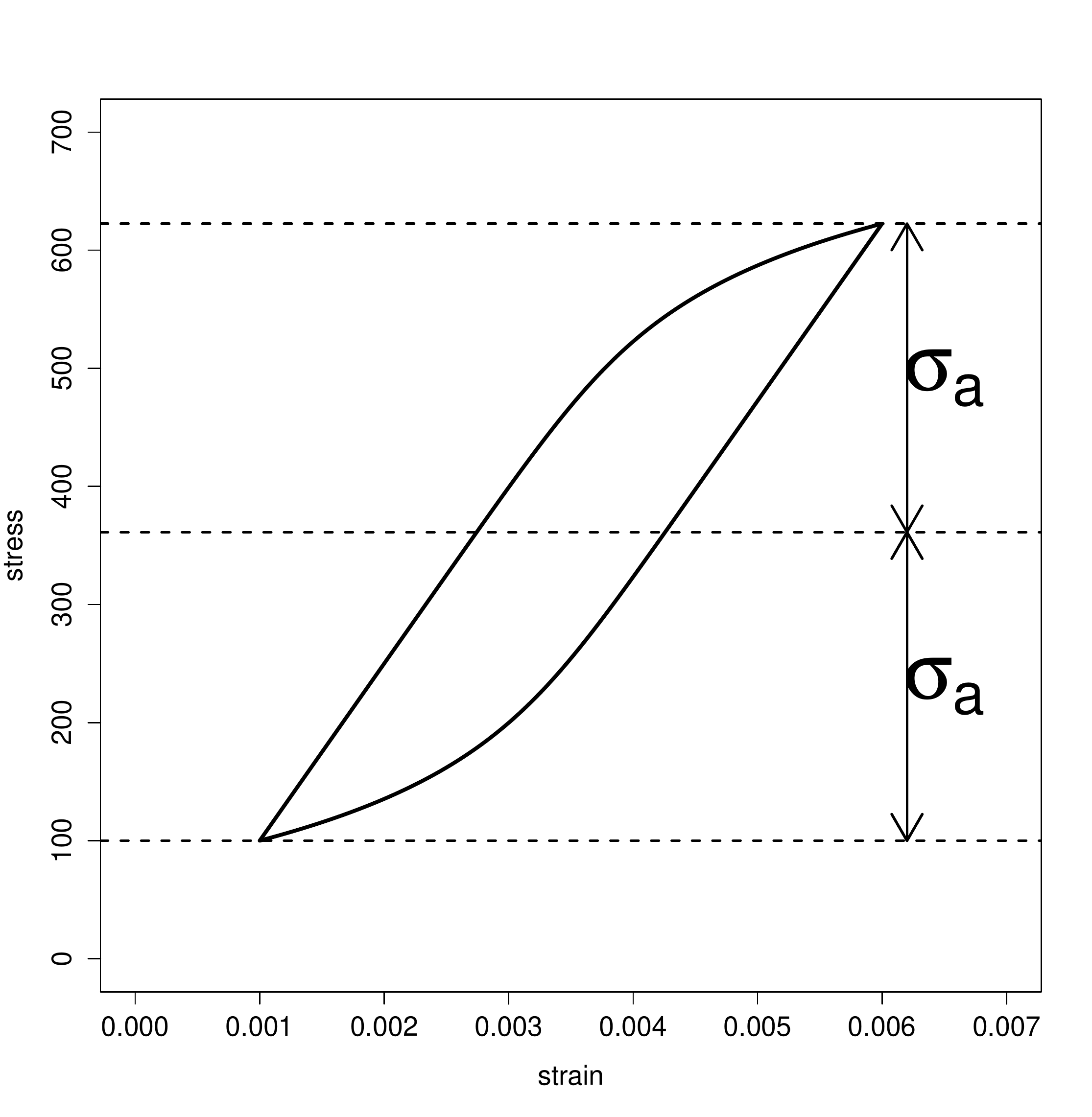} \includegraphics[width=.65\textwidth]{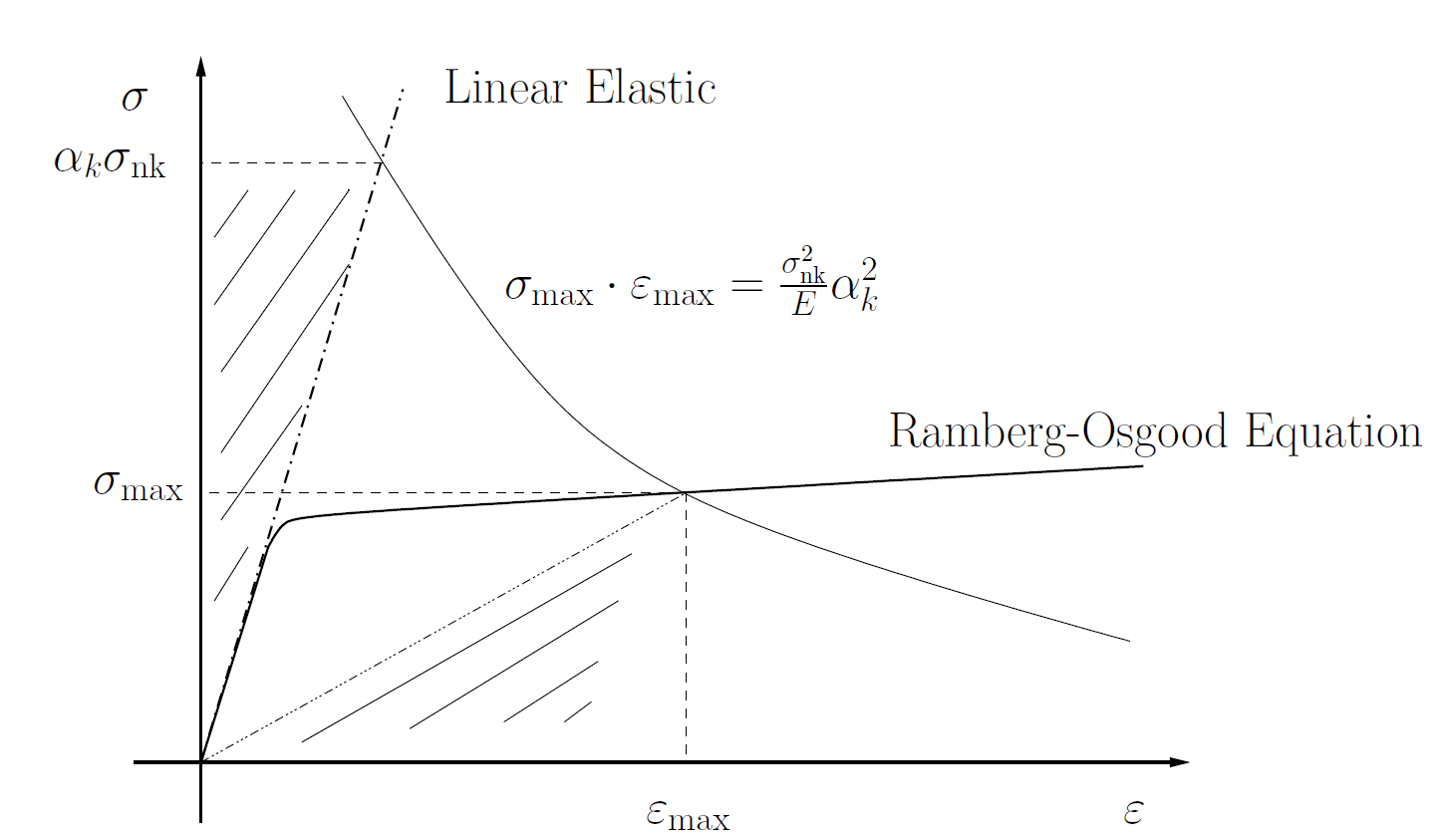}
\caption{(a) Stress Strain Hysteresis and (b) Neuber shakedown -- Notch factors $\alpha_k=1$ holds for solutions of the elasticity PDE}
\label{fig:StressStrain}
\end{figure} 

The limiting cycle is described by the Ramberg-Osgood equation
\begin{equation} \label{eqa:Ramberg-Osgood}
\varepsilon_a^{\rm el-pl}={\rm RO}(\sigma_a^{\rm el-pl})=\frac{\sigma_a^{\rm el-pl}}{E}+\left(\frac{\sigma_a^{\rm el-pl}}{K}\right)^{\frac{1}{n'}}.
\end{equation} 
Here $E$ is Young's modulus, $K$ is the hardening coefficient and $n'$ is the hardening exponent \cite{Werkstoffe,Erm}.

One of the problems with the linear elasticity equation \eqref{eqa:ElasticityStrong} is, that the elastic von Mises stress amplitude $\sigma_a^{\rm el}$ largely overestimates the actual elastic-plastic stress amplitude $\sigma_a^{\rm el-pl}$ as it does not take into account plastic yielding causing stress relaxation. A procedure often applied to convert $\sigma_a^{\rm el}$ into $\sigma_a^{\rm el-pl}$ is the Neuber shake down
\begin{equation}
\frac{\left({\sigma_a}^{\rm el}\right)^2}{E}=\frac{\left({\sigma_a}^{\rm {el-pl}}\right)^2}{E}+\sigma_a^{\rm el-pl}\left(\frac{\sigma_a^{\rm el-pl}}{K}\right)^{\frac{1}{n'}},
\end{equation}
which has to be solved for $\sigma_a^{\rm el-pl}={\rm SD}(\sigma_a^{\rm el})$.   Entering $\sigma_a^{\rm el-pl}$ into the Ramberg-Osgood equation, one obtains $\varepsilon_a^{\rm el-pl}$. Finally this quantity is related to the deterministic crack initiation time $N_{i_{\rm det}}$ via the Coffin--Manson-Basquin equation
\begin{equation}
\label{eqa:CMB}
\varepsilon_a^ {\rm el-pl}={\rm CMB}(N_{i_{\rm det}})=\frac{\sigma_f'}{E}\left(2N_{i_{\rm det}}\right)^b+\varepsilon_f'\left(2N_{i_{\rm det}}\right)^ c,
\end{equation}  
with $\sigma_f',\varepsilon_f'>0$ and $b,c<0$ material constants \cite{Werkstoffe,Vorm}. This equation can now be inverted to obtain $N_{i_{\rm det}}$, the deterministic number of load cycles to crack initiation. This number is interpreted as a deterministic first failure time. For the first failure time of the entire component $\Omega$, we then obtain
\begin{equation}
\label{eqa:Ndet}
N_{i_{\rm det}}(\Omega)=\inf_{x\in\partial\Omega} N_{i_{\rm det}}\left(\varepsilon_a^ {\rm el-pl}(x)\right)
\end{equation}
Note that fatigue cracks always take their origin at the surface of the component. Therefore the infimum in \eqref{eqa:Ndet} is only over all surface points in $\partial \Omega$.

In the next step we use the analysis of the previous subsection for setting up probabilistic crack initiation models. In order to keep the structural properties of the probabilistic model as close to the deterministic life prediction as possible, the idea is to use $N_{i_{\rm det}}(\varepsilon_a^{\rm el-pl}(x))$ either as a local scale variable or a local location variable to define the density of the intensity measure $\rho$ that governs the law of the PPP crack initiation process. It turns out that the following two models can be set up in order to fulfil the requirements of no design to life, cf. Prop. \ref{prop:NoDesignToLife}:

\begin{defn} \label{def:FailureTimeModels}\textbf{(Local Crack Initiation Models)}
\begin{itemize}
\item[(i)] The local, probabilistic Weibull model is given by the intensity measure $\rho$ on $\mathcal{C}$ defined by the density function $\varrho(x,t)$ with respect to the surface measure $dA$ on $\partial\Omega$ and the Lebesgue measure on $\R{+}$
\begin{equation}
\label{eqa:locW}
\varrho_\Omega(x,t)=\frac{\bar m}{N_{i_{\rm det}}(\varepsilon_a^{\rm el-pl}(x))}\left(\frac{t}{N_{i_{\rm det}}(\varepsilon_a^{\rm el-pl}(x))}\right)^ {\bar m-1},~~m\geq 1.
\end{equation}
\item[(ii)] The local, probabilistic Gompert's model is given by the intensity measure $\rho$ on $\mathcal{C}$ defined by the density function $\varrho_\Omega(x,t)$ with respect to the surface measure $dA$ on $\partial\Omega$ and the Lebesgue measure on $\R{+}$
\begin{equation}
\label{eqa:locG}
\varrho_\Omega(x,t)=C\exp\left\{\alpha\left(t- N_{i_{\rm det}}(\varepsilon_a^{\rm el-pl}(x))\right)\right\},~~\alpha,C>0.
\end{equation}
\end{itemize}
\end{defn}

We note that both models have to be calibrated with experimental data, see e.g. \cite{SSBGRK2,Schmitz} for the maximum likelihood calibration of the local Weibull model. Note that also the CMB parameters $\varepsilon_f'$ and $\varsigma_f'$ are being recalibrated in this procedure. The local Gompert's model has the disadvantage that a (small) crack initiation hazard is already present at $t=0$.

Let us recall that the Weibull distribution ${\rm Wei}(\eta,\bar m)$ has the survival function $S(t)=e^{-\left(\frac{t}{\eta}\right)^{\bar m}}$ while the Gomperts distribution ${\rm Gomp}(C,\alpha)$ has survival function $S(t)=e^{-C(e^{\alpha t}-1)}$. The following proposition summarizes the properties of the associated first failure times:

\begin{prop}
\label{prop:WeibullAndGomperts} \textbf{(Properties of the Local Weibull and Gompert's Model)}
\begin{itemize}
\item[(i)] Let $\tau_\Omega$ be the first failure time of the local Weibull model. Then, $\tau_\Omega$ is ${\rm Wei}(\eta_\Omega,\bar m)$-distributed with $m$ from \eqref{eqa:locW} and 
\begin{equation}
\eta_\Omega=\left(\int_{\partial\Omega}\left(\frac{1}{N_{i_{\rm det}}(\varepsilon_a^{\rm el-pl}(x))}\right)^ {\bar m} \,dA\right)^{-\frac{1}{\bar m}}.
\end{equation}
In particular, the strong no design to life property from Prop. \ref{prop:NoDesignToLife} (ii) applies.
\item[(ii)] Let $\tau_\Omega$ be the first failure time of the local Gompert's model. Then, $\tau_\Omega$ is ${\rm Gomp}(J_\Omega,\alpha)$-distributed with $\alpha$ from \eqref{eqa:locG} and 
\begin{equation}
J_\Omega=\frac{C}{\alpha}\int_{\partial\Omega}e^{-\alpha N_{i_{\rm det}}(\varepsilon_a^{\rm el-pl}(x))} \,dA.
\end{equation}
In particular, the strong no design to life property from Prop. \ref{prop:NoDesignToLife} (iii) applies.

\end{itemize}
\end{prop}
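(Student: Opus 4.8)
The plan is to reduce everything to Corollary~\ref{cor:FailureTimeDistribution}, which gives $F_{\tau_\Omega}(t)=1-e^{-\rho(\mathcal{C}_t)}$ as soon as the intensity measure $\rho$ of the crack initiation process is known. In both models of Definition~\ref{def:FailureTimeModels}, $\rho$ is prescribed by a non-negative density $\varrho_\Omega$ with respect to $dA\otimes dt$ and is supported on $\partial\Omega\times\mathbb{R}_+$; being absolutely continuous with respect to $dA\otimes dt$ it is atom free, so by the converse direction of Proposition~\ref{prop:classPPP} the associated PPP is a genuine simple, non atomic crack initiation process with independent increments, and the Corollary applies. Thus the entire proof amounts to evaluating
\begin{equation}
\rho(\mathcal{C}_t)=\int_{\partial\Omega}\int_0^t\varrho_\Omega(x,s)\,ds\,dA(x),
\end{equation}
where the order of integration is immaterial by Tonelli's theorem, and then matching the result with the stated survival functions and with the no-design-to-life criteria of Proposition~\ref{prop:NoDesignToLife}.

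For part (i) I would first do the inner integral by the substitution $u=s/N_{i_{\rm det}}(\varepsilon_a^{\rm el-pl}(x))$, obtaining $\int_0^t\varrho_\Omega(x,s)\,ds=\big(t/N_{i_{\rm det}}(\varepsilon_a^{\rm el-pl}(x))\big)^{\bar m}$. Pulling $t^{\bar m}$ out of the surface integral yields $\rho(\mathcal{C}_t)=t^{\bar m}\int_{\partial\Omega}N_{i_{\rm det}}(\varepsilon_a^{\rm el-pl}(x))^{-\bar m}\,dA=(t/\eta_\Omega)^{\bar m}$ with exactly the $\eta_\Omega$ of the statement, hence $F_{\tau_\Omega}(t)=1-e^{-(t/\eta_\Omega)^{\bar m}}$, i.e. $\tau_\Omega\sim{\rm Wei}(\eta_\Omega,\bar m)$. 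For the second assertion, $\eta_\Omega$ enters as a scale variable in the sense of Proposition~\ref{prop:NoDesignToLife}~(i) with baseline distribution $F_0(t)=1-e^{-t^{\bar m}}$, whose hazard rate $h_0(t)=\bar m\,t^{\bar m-1}$ exists and is (non-)decreasing exactly because $\bar m\geq 1$ in \eqref{eqa:locW}; thus Proposition~\ref{prop:NoDesignToLife}~(ii) applies.

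For part (ii) the inner integral is $\int_0^tC\,e^{\alpha(s-N_{i_{\rm det}}(\varepsilon_a^{\rm el-pl}(x)))}\,ds=\frac{C}{\alpha}\,e^{-\alpha N_{i_{\rm det}}(\varepsilon_a^{\rm el-pl}(x))}\,(e^{\alpha t}-1)$, and integrating over $\partial\Omega$ gives $\rho(\mathcal{C}_t)=J_\Omega\,(e^{\alpha t}-1)$ with the stated $J_\Omega$, hence $F_{\tau_\Omega}(t)=1-e^{-J_\Omega(e^{\alpha t}-1)}$, i.e. $\tau_\Omega\sim{\rm Gomp}(J_\Omega,\alpha)$. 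Since $H(t)=\rho(\mathcal{C}_t)$ is differentiable, Corollary~\ref{cor:FailureTimeDistribution}~(ii) gives $h_{\tau_\Omega}(t)=H'(t)=J_\Omega\,\alpha e^{\alpha t}=J_\Omega\,h_0(t)$ with the positive baseline hazard $h_0(t)=\alpha e^{\alpha t}$ of ${\rm Gomp}(1,\alpha)$; this is precisely the proportional hazard situation of Proposition~\ref{prop:NoDesignToLife}~(iii) with $C_\Omega=J_\Omega$, which therefore applies.

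The computations above are routine; the one point that genuinely requires care is the well-definedness and finiteness of $\rho$, i.e. the convergence of the surface integrals defining $\eta_\Omega$ and $J_\Omega$. For the Gompert's model this is immediate, since $e^{-\alpha N_{i_{\rm det}}(\varepsilon_a^{\rm el-pl}(\cdot))}\le 1$ and $\partial\Omega$ has finite surface measure. For the Weibull model one additionally needs $N_{i_{\rm det}}(\varepsilon_a^{\rm el-pl}(\cdot))$ to be bounded away from $0$ on $\partial\Omega$, so that $N_{i_{\rm det}}^{-\bar m}$ is integrable; I would obtain this from continuity of the stress field $\sigma_a$ up to the boundary (elliptic regularity for \eqref{eqa:ElasticityStrong}), hence boundedness of $\sigma_a^{\rm el}$, together with the strict monotonicity of the maps ${\rm RO}$, ${\rm SD}$ and ${\rm CMB}$ that express $N_{i_{\rm det}}$ as a function of $\sigma_a^{\rm el}$. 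Under these regularity assumptions $\rho$ is a finite, atom free Radon measure and all of the above steps are justified.
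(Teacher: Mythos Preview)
Your proof is correct and follows essentially the same route as the paper: both compute $\rho(\mathcal{C}_t)$ by integrating the density $\varrho_\Omega$ over $\partial\Omega\times[0,t]$ and read off the Weibull resp.\ Gompert's survival functions. You are in fact more thorough than the paper, which only records the integral computations; your explicit verification of the hypotheses of Proposition~\ref{prop:NoDesignToLife} (in particular that $h_0(t)=\bar m\,t^{\bar m-1}$ is increasing because $\bar m\geq 1$) and your remarks on finiteness of the surface integrals fill in details the paper leaves implicit.
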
 
\begin{proof}
(i) We have $S_{\tau_\Omega}(t)=e^{\rho(\mathcal{C}_t)}$ with $\mathcal{C}_t=\overline{\Omega}\times [0,t]=\partial\Omega\times [0,t]\cup\Omega\times [0,t]$. As $\rho$ is absolutely continuous with respect to the surface measure $dA\times dt$ concentrated on $\partial\Omega\times [0,t]$, $\rho(\mathcal{C}_t)=\rho(\partial\Omega\times[0,t])=\int_0^t\int_{\partial\Omega} \varrho_\Omega(x,\tau)\, dAd\tau$ follows. We thus get 
\begin{align}
\begin{split}
\rho(\mathcal{C}_t)&=\int_0^t\int_{\partial\Omega} \frac{\bar m}{N_{i_{\rm det}}(\varepsilon_a^{\rm el-pl}(x))}\left(\frac{\tau}{N_{i_{\rm det}}(\varepsilon_a^{\rm el-pl}(x))}\right)^ {\bar m-1} dAd\tau\\
&= t^{\bar m}\int_{\partial\Omega}\left(\frac{t}{N_{i_{\rm det}}(\varepsilon_a^{\rm el-pl}(x))}\right)^ {\bar m} dA=\left(\frac{t}{\eta_\Omega}\right)^{\bar m}.
\end{split}
\end{align}
(ii) In analogy to (ii) we obtain
\begin{align}
\begin{split}
\rho(\mathcal{C}_t)&=\int_0^t\int_{\partial\Omega} C\exp\left\{\alpha\left(\tau- N_{i_{\rm det}}(\varepsilon_a^{\rm el-pl}(x))\right)\right\}  dAd\tau\\
&= \frac{C}{\alpha}\left(e^{\alpha t}-1\right)\int_{\partial\Omega}e^{-\alpha N_{i_{\rm det}}(\varepsilon_a^{\rm el-pl}(x))} dA=J_\Omega \left(e^{\alpha t}-1\right).
\end{split}
\end{align}
\end{proof}

We note that also the local Weibull model can be considered as a within the framework of proportional hazard models with 
\begin{equation}
\label{eqa:propHazardWei}
J_\Omega=\int_{\partial\Omega}\left(\frac{1}{N_{i_{\rm det}}(\varepsilon^{\rm el-pl}_a(x))}\right)^{\bar m}dA.
\end{equation}

\section{\label{sec:FEA} Discretization with Finite Elements}
\label{sec:DIS}

\subsection{Discretization of the Elasticity PDE}

In this section we discuss the numerical calculation of failure probabilities for components with complex geometry, such that no analytic solution for $u$ and $\sigma$ is available. We follow the standard approach of finite element discretization. We first recall some fundamentals of finite element analysis (FEA) for the elasticity PDE.

A finite element is defined as a triple $\{K,P(K),\Sigma(K)\}$, where
$ K \subset \R{3}$ is a compact, connected Lipschitz set with nonempty
interior called \textbf{element domain},
$P(K)$ is a finite-dimensional vector space of functions (mostly polynomials) and
  the set $\Sigma(K)=\{\varphi_1,...,\varphi_{n_{sh}}\}$ are linear forms $\phi_l:P(K)\rightarrow \R{}$ such that this space is a basis for $\mathcal{L}(P(K);\R)$ (the bounded linear functionals on $P(K)$).
Note that the linear forms $\{\varphi_1,...,\varphi_{n_{sh}}\}$ are called the local degrees of 
freedom.
The basis functions $\{\theta_1,...,\theta_{n_{sh}}\}$ in $P(K)$ which satisfies $\varphi_i(\theta_j)=\delta_{ij}$ for $1\leq i,j \leq n_{sh}$
are called local shape functions.
We call $\{K,P(K),\Sigma(K)\}$ a Lagrange finite element, if there is a set of points $\{X_{1}^K,\ldots,X_{n_{\rm sh}}^K\}\in K$ such that, for all $p \in P(K)$, $\varphi_i(p)=p(X_{i}^K)$, $1\leq i \leq n_{sh}$.

A mesh $\mathcal{T}_h$ is defined as a union of compact, connected, Lipschitz sets $ K_m $ with $\mathring{K}_m\neq \emptyset$ such that
$\{K_{m}\}_{1\leq m\leq N_{el}}$ forms a partition of $\Omega$.
A mesh can be generated from a reference element $\hat{K}$ and a set of geometric transformations $T_K : \hat{K} \rightarrow K $, which
map $\hat{K}$ to the current mesh element. We assume that the transformation $T_K$ is bijective and all mesh element are generated from
the same reference element.
By setting $\widehat{P}=P\circ T_K$ , $\widehat\theta_{j}=\theta_{j}\circ T_K$ and $\widehat\varphi_{j}(p\circ T_K)=\varphi_{j}(p)$ for $j\in\{1,\ldots,n_{\rm sh}\}$ we have:
\begin{equation}
\label{eqa:Transformation}
T_K(\hat\xi)=T_K(\hat\xi,X)=\sum_{j=1}^{n_{\rm sh}}\widehat\theta_j(\widehat{\xi})X_{j}^K,~~\widehat\xi\in
\widehat{K}.
\end{equation}

We define $\{X_1,...,X_N\}=\bigcup_{K\in \mathcal{T}_h}\{X_{1}^K,...,X_{n_{sh}}^K\}$ as the set of all the Lagrange nodes.
 For $K\in \mathcal{T}_h$ and $m\in\{1,...,n_{sh}\}$, let
\begin{align}
\begin{split}
\mathcal{T}_h\times\{1,\ldots,n_{\rm sh}\}\to&\{1,\ldots,N\}\\
                         (K,m)\to &\widehat{j}(K,m)
\end{split}
\end{align}
be the mapping which allow us to know the relation
between local and global index of nodes.
Let $\{\varphi_1,...,\varphi_N\}$ be a set of functions defined as:
\begin{align}
\begin{split}
\varphi_{i|K}(X_{m}^K)=
\left\{
  \begin{array}{ll}
    \delta_{mn}, & \hbox{if there is $n\in \{1,...,n_{sh}\}$ such that $i=\widehat{j}(K,n)$;} \\
    0, & \hbox{otherwise.}
  \end{array}
\right.
\end{split}
\end{align}
This implies that $\varphi_i(X_j)=\delta_{ij}$ for $1\leq i,j \leq N$. The set of functions $\{\varphi_1,...,\varphi_N\}$  are the global shape functions.
We define the discretized finite element space $H_h^1(\Omega)$ as 
$H_h^1(\Omega) = span\{\varphi_{j},j\in\{1,\ldots,N\}\}$.
We also consider $[H^1_h(\Omega)]^3$ and we define $H^1_{D,h}(\Omega,\R{3})$ as
$
[H^1_{D,h}(\Omega)]^3 = \{u\in [H^1(\Omega)]^3\, \vert \, u=0 ~~\text{on}~~ \overline{\partial\Omega_D}\cap\{X_1,\ldots,X_n\} \}
$. We now write the discretized elasticity problem as:
\begin{equation}
\label{eqa:WeakPDEDiscrete_new}
\left\{
  \begin{array}{ll}
    \text{Seek $u \in H_{D,h}^1(\Omega,\R{3})$ such that} &  \\\\
      B(u,v)=\int\limits_{\Omega}f\cdot v\, dx+\int\limits_{\partial\Omega_N}g\cdot v \,dA , \forall v\in H_{D,h}^1(\Omega,\R{3}) &
  \end{array}
\right.
\end{equation}  
By the usual theory of finite elements, see e.g. \cite{ErnGuerm04}, one can see that the discretized solutions of \eqref{eqa:WeakPDEDiscrete_new} converge in $[H^1(\Omega)]^3$ to the weak solution of the continuum problem \eqref{eqa:ElasticityStrong}, if $h\to 0$. 

\subsection{Discretization of the Probabilistic Model} 

For the numerical approximation of the failure probability we have to  to compute the cost functional
$J_\Omega=J(\Omega,u)$ which is an integral over the surface
$\partial\Omega$. We denote by $\mathcal{N}_h$ the collection of the boundary faces $F$ of finite elements $K=K(F)\in \mathcal{T}_h$ that lie in $\partial\Omega$.
The computation of surface integral $J_{sur}(\Omega,u)$ reduces to
evaluating integrals over each element in the collection
$\mathcal{N}_h$ as follows:
\begin{align}
\int\limits_{\partial\Omega}\left(\frac{1}{Ni_{det}(\sigma(x))}\right)^{\bar{m}}
dA=\sum_{F\in\mathcal{T}_h}\int\limits_{F}\left(\frac{1}{N_{i_{\rm det}}(\sigma(x))}\right)^{\bar{m}} dA.
\end{align}
Let $T_F: \widehat{F}\rightarrow F$ be a
$\mathcal{C}^1$-diffeomorphism mapping the geometric reference face
$\widehat{F} \subset \R{2}$ to any face $F$ in
$\mathcal{N}_h$. Let $J_F(\widehat{x})=\frac{\partial
T_F(\widehat{x})}{\partial \widehat{x}}\in \R{3,2}$ be the Jacobian matrix of
the mapping $T_F$ at $\widehat{x}$ and $F(\widehat{x})=(J_F(\widehat{x}))^TJ_F(\widehat{x})$ be the
Gram matrix. The change of variables
$x=T_K(\widehat{x})$ yields :
\begin{align}
\int\limits_{F}\left(\frac{1}{N_{i_{\rm det}}(\sigma(x))}\right)^{\bar{m}}
dA=\int\limits_{\widehat{F}}\left(\frac{1}{Ni_{det}(\sigma(T_F(\widehat{x})))}\right)^{\bar{m}} \sqrt{\det(g_F(\widehat{x}))}
d\widehat{A}
\end{align}
We consider a quadrature on $\widehat{F}$ defined by
$l_{q}^{F}$ Gau\ss $~$ points
$\{\widehat{\xi_{1}^{F}},...,\widehat{\xi_{l_{q}^{F}}^{F}}\}$
and $l_{q}^{F}$ weights
$\{\widehat{\omega_{1}^{F}},...,\widehat{\omega_{l_{q}^{F}}^{F}}\}$. We thus get
\begin{equation}\label{surface}
J(\Omega,u)\approx\sum_{F\in\mathcal{T}_h}\sum_{l=1}^{l_q}\omega_{lF}\left(\frac{1}{Ni_{det}
(\sigma_h(\xi_{lF}))}\right)^{-\bar{m}}.
\end{equation}
where we have set By setting
$\omega_{lF}=\widehat{\omega_l}\sqrt{\det(g_F(\widehat{\xi_{l}^{F}}))}$ and
$\xi_{lF}=T_F(\widehat{\xi_{l}^{F}})$.

For the computation of $J(\Omega,u)$ one has to evaluate the finite element stress $\sigma_h(x)=\lambda \nabla\cdot u_h(x)\mathcal{I}+2\mu \varepsilon(u_h(x))$ which can be computed on the basis of the first derivative $\nabla u_h$ of the finite element displacement field $u_h$.
By expanding $u$  in the global basis functions $\theta_j$, we get
$u(\xi)=\sum_{j=1}^{N}u_j\theta_j(\xi)=\sum_{K\in\mathcal{T}_h}\sum_{m=1}^{n_{\rm sh}}u_{j_{(K,m)}}\widehat{\theta}_m \circ T_{K}^{-1}(\xi)$ so that
\begin{equation}
\label{eqa:Jacobi}
\nabla u_h(\xi)=\sum_{m=1}^{n_{\rm sh}}u_{\widehat{j}(K,m)}\otimes( J_{K}(\widehat{\xi})^{T})^{-1}\widehat{\nabla} \widehat{\theta}_m (\widehat{\xi})),~\mbox{ for } \xi \in K\mbox{ and }\xi=T_K(\hat\xi).
\end{equation}
where $J_K(\widehat{\xi})=\widehat{\nabla} T_{K}(\hat\xi)=\sum_{j=1}^{n_{\rm sh}}\widehat\nabla\widehat\theta_j(\widehat{\xi})X_{K,j}$ be the Jacobian matrix of
the mapping $T_K$.
\subsection{The Example of a Radial Compressor}

As an example, we present the results the probabilistic life calculation of a radial compressor from the CalculiX FEA Toos suite\footnote{www.calculix.de} by G. Dhondt and K. Wittig, see also \cite{SSGBRK,Schmitz} for other numerical studies. The component consists out of 7 segments with 2 blades of different length, each. The FEA discretization contains 1 302 brick elements with 20 local degrees of freedom and reduced quadrature $l_q=8$. In total, the model contains 47 971 nodes. The surface quadrature is chosen as $l_q^F=16$ in order to account for the non linearity of the integrand in the objective functional.    

The material for the compressor is AlSi - C355, however, as CMB and RO parameters are not publicly available, we use those of the alloy AlMgSi6082, which is material with similar properties \cite{Alum}. We also note that the scaling procedure in \cite{SBKRSG} has been applied.

\begin{figure}
\centerline{\includegraphics[width=.35\textwidth]{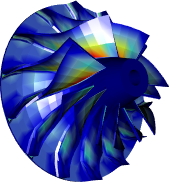}~~~~~~\includegraphics[width=.55\textwidth]{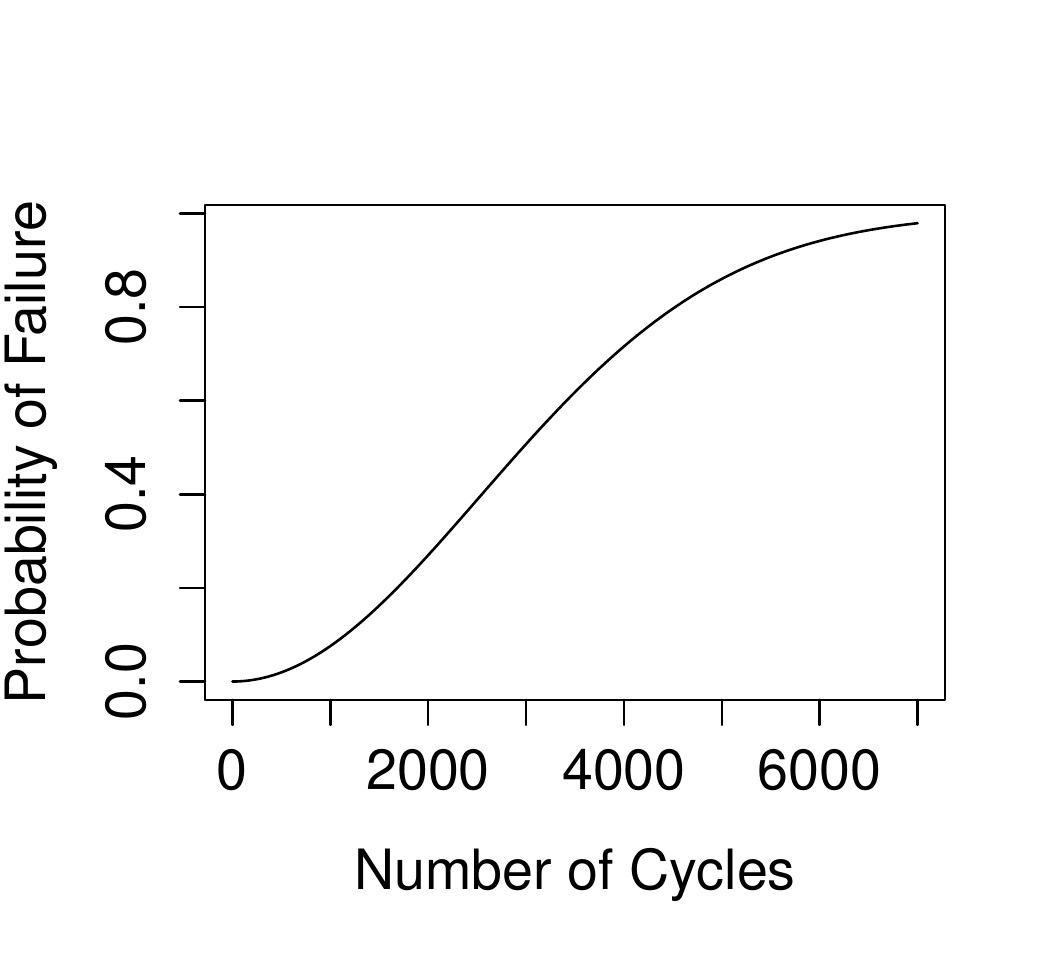}}

\caption{Local crack formation intensity for $J$ (left) and the failure probability over load cycles (right)}
\label{fig:Compressor}
\end{figure}

Gas pressure surface loads are neglected. The volume force is a centrifugal load which stems from a rotation speed of 110000 rpm. Figure \ref{fig:Compressor} shows the result of the calculation. The result nicely fits to the safe  life of app.\ 2000 cycles mentioned by Dhont and Wittig. 

\section{\label{sec:MultiScale}Microscopic Models for Failure Times}

In the following we give a short summary on investigations of scatter in LCF life caused by the presence of random grain orientation and resulting scatter in shear stresses of the slip planes of a face centred cubic to which a multi-axial load is applied to. By this work, we intend to derive physics based models for the hazard of mechanical failure in order to replace the purely empirical Weibull and Gompert's models in the future via a multi scale modelling approach.

\subsection{Crystal Properties and Probabilistic Schmid Factors}
We consider a crystal with face centered cubic lattice and planes of most dense packages being equal to the 4 slip planes forming a tetrahedron. As in each case two different edges of a slip plane build a slip system the fcc crystal has 12 of them. The unit cell of an fcc crystal with appropriate slip planes and systems is shown in Figure \ref{fig:slipSystem}.
\begin{figure}[t]
\begin{center}
\includegraphics[scale=0.6]{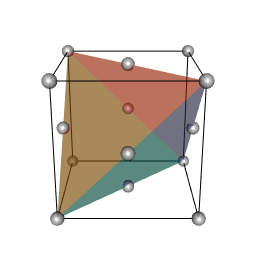}
\end{center}
\caption{Slip planes and systems in the face centred cubic lattice.}\label{fig:slipSystem}
\end{figure}
Let $n_i$ be the normal of slip plane $i$, $i=1,...,4$, and $s_{i,j}$, $j=1,2,3$, the vector of the $j$-th slip system related to slip plane $i$.\\
Considering the material to be isotropic, the random orientation of a grain\footnote{Structure of several unit cells having the same orientation.} can be represented by a random tranformation in form of a $3 \times 3$ rotation matrix $U \in$ $SO(3)$\footnote{$U$ is distributed according to the Haar measure.}, acting on $n_i$ and $s_{i,j}$. \\
Hence slip planes and systems become probabilistic via
\begin{equation}
n_i(U)=Un_i,     s_{i,j}(U)=Us_{i,j}\,.
\end{equation}
The shear stress in direction of a slip system $s_{i,j}$, $i=1,...,4$, $j=1,2,3$ and the according maximal shear stress given the stress tensor $\sigma$ can be calculated as follows
\begin{equation}
\tau_{i,j}=n_i\cdot \sigma \cdot s_{i,j}, \tau=\max_{i,j}|\tau_{i,j}|.
\end{equation}
Considering the random transformation acting on the slip planes and systems as shown in $(1)$ one can obtain probabilistic shear stresses as well as their maximal value depending on rotation $U$,
\begin{equation}
\tau_{i,j}(U)=Un_i\cdot \sigma \cdot Us_{i,j}, \tau=\max_{i,j}|\tau_{i,j}(U)|.
\end{equation}
As the probability distribution of $U$ is uniquely determined one can carry out simulations generating random rotation matrices or apply an analytical method to obtain values of the distribution and density functions of maximum shear stress to examine the differences depending on the respective loading state. Though usually the probabilistic Schmid factor is defined as $m(U)=\tau(U)/\sigma$, we can neglect this by normalisation and use $\tau(U)$ directly instead.\\ \\
Assuming an uniaxial stress state the stress tensor contains only one value not equal to zero,
\begin{equation}
\sigma=\sigma_k\cdot e_k\otimes e_k ,
\end{equation}
where $k=1,2,3$ and $e_k\otimes e_k=(e_ie_j)_{i,j=1,2,3}$  depending on the $k$-th stress direction.\\

\subsection{Multiaxial Stress States}

Let us consider three-dimensional loading conditions comprising stresses in normal directions and shear stresses in all directions of space. It is a frequent practice to calculate the so-called equivalent tensile stress\footnote{Or von Mises stress.} $\sigma_v$\footnote{According to the von Mises shape modification hypothesis hydrostatic stress conditions with similar principal stress in all directions lead to a value of zero.} from $\sigma$ as a comparable value for multiaxial stress states to an uniaxial tensile test,
\begin{equation}
\sigma_v=\sqrt{\frac{1}{2}\lbrack(\sigma_I-\sigma_{II})^2+(\sigma_{II}-\sigma_{III})^2+(\sigma_{III}-\sigma_{I})^2\rbrack},
\end{equation}
where $\sigma_I,\sigma_{II}, \sigma_{III}$ are the principal stress values.\\\\
The disantvantage of using the equivalent stress is that it does not take into account the relation between the individual principal stress values, hence the possible impacts of changes in loading conditions having equal von Mises stress on shear stresses acting on the slip systems are disregarded.\\
Assuming the principal stress tenses to be sorted according to their absolute size,
\begin{equation}
|\sigma_I|>|\sigma_{II}|>|\sigma_{III}|,
\end{equation}
we introduce the parameter $\kappa\in\lbrack 0,1\rbrack$ describing the relation of $\sigma_{III}$ and $\sigma_{II}$ to $\sigma_I$,
\begin{equation}
\kappa=\frac{|\sigma_{III}-\sigma_{II}|}{|\sigma_I|}.
\end{equation}
Parameter $\kappa$ is only equal to zero in the uniaxial loading state, the direction is the one having the largest absolute value of principal stress $\sigma_{I}$. As noticeable on Figure \ref{fig:Uniaxial} and \ref{fig:Multiaxial} the densities of the distributions of maximum shear stress differ considerably, either in mean or scatter.
\begin{figure}[htb]
    \centering
    \begin{minipage}[t]{0.5\linewidth}
        \centering
        \includegraphics[width=\linewidth]{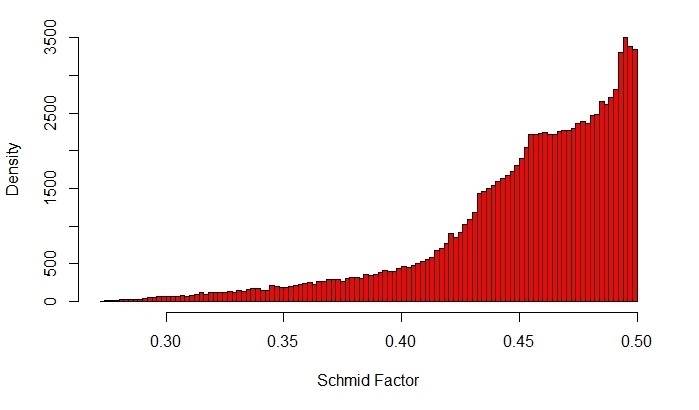}
        \caption{Uniaxial Load State:$\kappa=0$}\label{fig:Uniaxial}
    \end{minipage}
    \hfill
    \begin{minipage}[t]{0.5\linewidth}
        \centering
        \includegraphics[width=\linewidth]{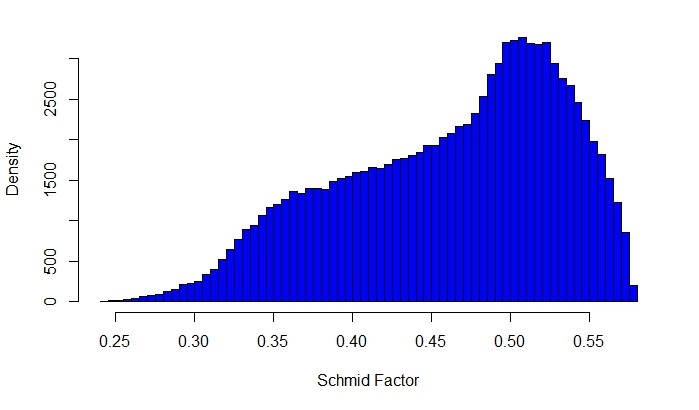}
        \caption{Multiaxial Load State: $\kappa=1$} \label{fig:Multiaxial}
    \end{minipage}
\end{figure}

\subsection{Impacts on Durability}
From the results for the distributions of maximum shear stress depending on the relation of principal stress values one can deduce the impact on the distributions of LCF life, respectively cycles to crack initiation $N_i$.
Since stress-life curves are not commonly used we adjust the strain amplitude by means of the Schmid factors. \\ 
Therefore consider the Ramberg-Osgood relation \eqref{eqa:Ramberg-Osgood} with $\varepsilon_a=RO(\sigma_a)$, 
inverse $\sigma_a=RO^{-1}(\varepsilon_a)$ and the strain amplitude being adjusted by maximum shear stress $\tau(U)$,
\begin{equation}
\varepsilon_a(\tau(U))=RO\Biggl(\frac{\tau(U)}{\vartheta}\cdot RO^{-1}(\varepsilon_a)\Biggr),
\end{equation}
where $\vartheta$ is the expected value of Schmid factors under uniaxial loading conditions. 

Hence the inverted relation of the Schmid factor adjusted strain amplitude and the Coffin-Manson Basquin equation can be applied and we obtain the number of cycles to crack initiation depending on calculated maximal shear stress  by using
\begin{equation}
N_i(\tau(U))=CMB^{-1}(\varepsilon_a(\tau(U))).
\end{equation}
As one can notice deviating results for the distributions of $N_i$ depending on the principal stress combination, this implies that $\kappa$ has a large impact on the LCF life and the differences we can see in Figure \ref{fig:Uniaxial} and Figure \ref{fig:Multiaxial} can be transferred.
\begin{figure}[htb]
    \centering
    \begin{minipage}[t]{0.5\linewidth}
        \centering
        \includegraphics[width=\linewidth]{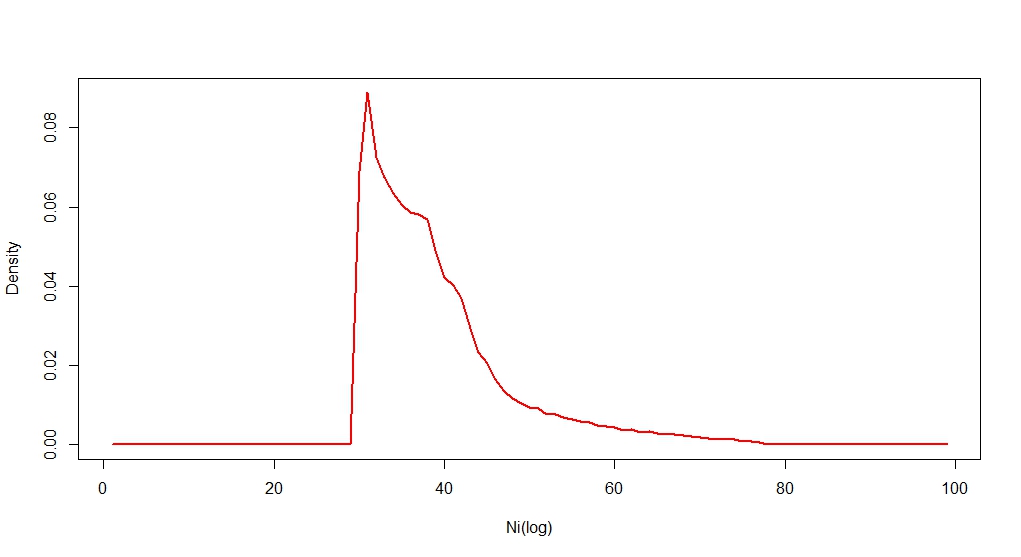}
        \caption{Uniaxial Load State:$\kappa=0$}\label{fig:LifeUni}
    \end{minipage}
    \hfill
    \begin{minipage}[t]{0.5\linewidth}
        \centering
        \includegraphics[width=\linewidth]{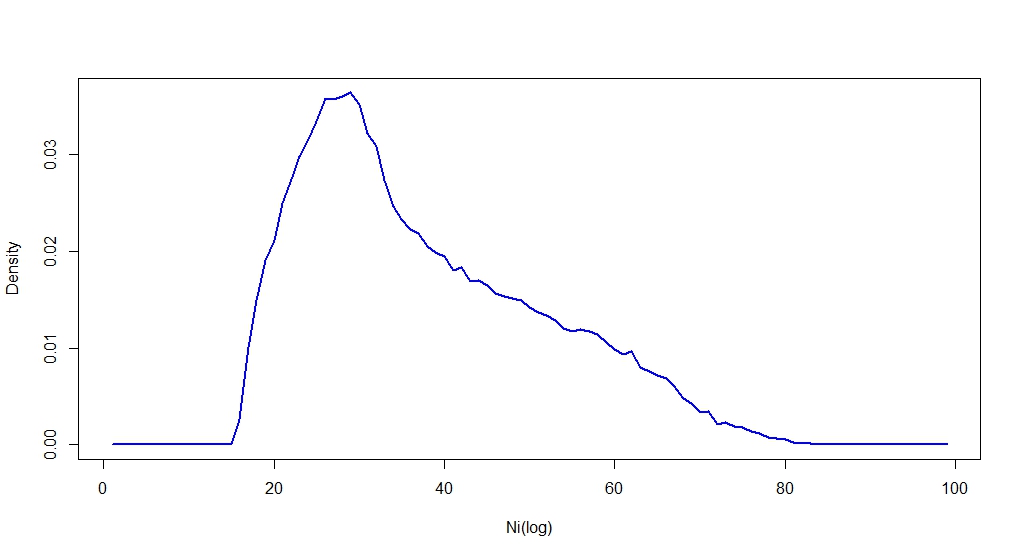}
        \caption{Multiaxial Load State: $\kappa=1$}\label{fig:LifeMulti}
    \end{minipage}
\end{figure}

\subsection{An Outlook on Multiscale Modeling Approaches}

A complex component, like a gas turbine blade, has a surface made out of thousands of grain facets. However, highly loaded spots may only contain a few of such facets. In order to accurately predict the LCF life of complex components based on the microscopic investigations exposed here, let us consider a surface $A$ which is exposed to a constant stress $\sigma$. Let $\kappa=\kappa(\sigma)$ be the associated parameter measuring the multiaxiality from $\kappa=0$ (uniaxial) to $\kappa=1$ (maximally multi-axial). Furthermore, let $N_g$ be the number of gains on the given surface and let $S_{j,N_i,\sigma}(t)$ be the survival function of grain $j$ obtained from Figures \ref{fig:LifeUni} and \ref{fig:LifeMulti} and the related distributions for an intermediate $\kappa$. We note that, apart from the parameter $\kappa(\sigma)$, $S_{N_i}(t|\sigma)=S_{j,N_i}(t|\sigma)$ also depends on the von Mises stress $\sigma^{\rm el}$. Following the general logic of extreme value theory \cite{EscobarMeeker}, the total probability of survival is thus 
\begin{equation}
S_A(t)=\prod_{j=1}^{N_g}S_{j,N_i,\sigma}(t)=(1-(1-S_{N_i}(t|\sigma)))^{N_g}.
\end{equation}
Let us consider a $t$ small enough such that $1-S_{N_i}(t|\sigma)\approx -H_{N_i}(t|\sigma)$ with $-H_{N_i}(t|\sigma)$ the cumulative Hazard rate of a single grain. Furthermore, the number of grains $N_g$ is roughly $N_g=\frac{1}{\mu_g}A$, where $\mu_g$ is the average grain surface. We then obtain in the given approximation
\begin{equation}
S_A(t)\approx \left(1-\frac{\frac{|A|}{\mu_g}H_{N_i,\sigma}(t)}{N_g}\right)^{N_g}\approx e^{-\frac{A}{\mu_g}H_{N_i}(t|\sigma)},  
\end{equation}
provided $\frac{A}{\mu_g}H_{N_i}(t|\sigma)\ll 1$, which says that the entire PoF of $A$ is small. Here $|A|$ stands for the surface volume of $A$.

It is now straight forward to propose the following physics based local model for LCF, namely
\begin{equation}
\label{eqa:newModel}
S_\Omega(t)=\exp\left\{-\frac{1}{\mu_g}\int_{\partial\Omega}H_{N_i}(t|\sigma(x))\, dA\right\},
\end{equation}
which is based on the single grain cumulative hazard rates $H_{N_i,\sigma(x)}(t)$ obtained in this section. We thus see that this model can also be formulated in the framework  of crack initiation processes with
\begin{align}
\varrho_\Omega(t,x)=\frac{1}{\mu_g}h_{N_i}(t|\sigma(x)),
\end{align}
where $h_{N_i}(t|\sigma(x))=H_{N_i}'(t|\sigma(x))$ is the hazard rate of the single grain distribution. 

We note that this model is only to illustrate the principle of the multi-scale modelling, but does not necessarily give the correct grain size dependence, as small grain structures usually lead to a longer component life. It therefore seems to be necessary, to combine the derivation of  \eqref{eqa:newModel} with microscopic crack percolation up to a fixed critical crack size as e.g. described  in \cite{Sornette} to get the correct grain size dependency. We will come back to this in future work.

\section{\label{sec:OpimalReliability}Shape Optimization -- Existence of Optimal Shapes}

\subsection{Optimal Reliability as a Problem of Shape Optimization}

We have seen that in the case of the local Weibull model and the local Gompert's model, the question of optimal reliability in all three variants of Def.\ \ref{def:OptRel} can be understood as a minimization of $J_\Omega$ in the shape $\Omega$, where $J_\Omega=J(\Omega,u)$ is the proportional hazard constant. Here we stressed the dependency of the solution $u$ of \eqref{eqa:ElasticityStrong}, which is though the calculation of elastic-plastic strain amplitude field $\varepsilon_a^{\rm el-pl}(x)$. We note that this field is a local function of $\nabla u$. Consequently, we can define an objective functional
\begin{equation}
\label{eqa:ObjectiveFunctProbLCF}
J(\Omega,u)=\int_{\partial\Omega}\left(\frac{1}{N_{i_{\rm det}}(\nabla u)}\right)^ m dA,
\end{equation}
or a related quantity for the local Gompert's model, see \eqref{eqa:locG}. All three versions of the optimal reliability problem \ref{def:OptRel} are then equivalent to the following problem of shape optimization:
\begin{align}
\min ~& J (\Omega,u(\Omega)) \nonumber \\
\text{s.t.} ~&u(\Omega) \text{ satisfies a given condition } P(\Omega), \label{P} \tag{$\mathbb{P}$}  \\
&\Omega \in \O,  \nonumber 
\end{align}
where $ P(\Omega) $ can be a PDE, ODE or variational inequality for example. In our case $ P(\Omega) $ is given by the PDE-formulation of linear isotropic elasticity, see \eqref{eqa:ElasticityStrong}
on $ \Omega \subset \R{3} $.

\subsection{\label{sec:Basics}Basic Notations and Abstract Setting for Shape Design Problems}\label{Sec: basic notations}
We will now summarize abstract methods presented for example in \cite{BucBut05,ShapeOpt,SokZol92} to show existence of solutions to shape optimization problems.


A solution of \ref{P} is sought as a set $\Omega$ in the familiy of admissible domains $\O$, containing possible candidates of shapes. This set is contained in a larger system $\Os$ on which some kind of convergence $\On \xrightarrow[]{\Os} \Omega $ as $n \to \infty$ depending on the respective problem is given. Further on, all possible solution $ u(\Omega) $ of $ P(\Omega) $ shall be contained in a state space $V(\Omega)$ of real functions on $\Omega$ for every $\Omega \in \Os$.

Since solutions to $ P(\Omega),\, \Omega \in \O $ are defined on changing sets there should be a suitable definition of convergence denoted by $y_{n} \rightsquigarrow y$ as $n \to \infty$ where $y_{n} \in V(\On)$, $\On \in \Os$. Moreover, require that any subsequence of a convergent sequence tends to the same limit as the original one. \\
We further assume that every state problem $P(\Omega)$ ha a unique soultion for eyery $ \Omega \in \O $ what makes us able define the map $u:\Omega \to u(\Omega) \in V(\Omega)$. The resulting set $\G=\lbrace(\Omega,u(\Omega))\, \vert \, \Omega \in \O\rbrace$  is called the graph of $u$ restricted to $\O$. The set $\mathcal{G}$ is called compact iff every sequence $\seq{\On,u(\On)} \subset \mathcal{G}$ has a subsequence $\subseq{\Omega_{n_{k}},u(\Omega_{n_{k}})}$ where
\begin{align}
	\begin{array}{r c l}
	 \Omega_{n_{k}} &\xrightarrow[]{\Os} & \Omega \\
	  u(\Omega_{n_{k}}) &\rightsquigarrow & u(\Omega)
	\end{array}
	\label{Def: Gcomp}
\end{align}

\noindent as $ k \to \infty $ for some $(\Omega, u(\Omega)) \in  \mathcal{G}$.

\vspace{1em}

\noindent A cost functional $\mathcal{J}$ on $\Os$ maps a pair $(\Omega,y)$,  $\Omega \in \Os$, $y \in V(\Omega)$ onto $ J (\Omega,y)$. for such functionals, lower semi-continuity is defined as follows: 

\noindent Let the sequences $\seq{\On}$ in $\Os$ and $\seq{y_{n}},\, y_{n} \in V(\On)$ be convergent against $\Omega \in \O$ and $y \in V(\Omega)$, respectively. Then
\begin{align}
	\left.
	\begin{array}{r c l}
		\On & \xrightarrow[n \to \infty]{\Os} & \Omega \\
		 y_{n} &  \underset{ n \to \infty}{\leadsto}& y
	\end{array} \right\rbrace \Rightarrow \liminf_{n \to \infty} J (\On,y_{n}) \geq J (\O,y)
\end{align}

\noindent Now, let $\O$ be a subfamily of $\Os$ and let $u(\Omega)$ be the unique solution of a given state problem $P(\Omega)$ for every $\Omega \in \O$. An \textit{optimal shape design problem} can be defined by  
\begin{equation} \label{opt_shape_des}
	\text{Find } \Omega^{*} \in \O\text{ such that \ref{P} is solved}\,.
\end{equation}

\noindent The following theorem based on the general fact that lower semicontinuous functions always possess a minimum on a compact set provides conditions for the existence of optimal shapes.

\begin{thm}\label{Thm: existence of optimal shapes} \cite[Ch. 2]{ShapeOpt}
Let $\Os$ be a family of admissible shapes with a subfamily $\O$. It is assumed that every $\Omega \in \O$ has an associated state problem $P(\Omega)$ with state space $V(\Omega)$ which is uniquely solved by $u(\Omega) \in  V(\Omega)$. Finally, require
\begin{itemize}
\item[(i)] compactness of $\mathcal{G}$,
\item[(ii)] lower semi-continuity of $J$.
\end{itemize}  
Then there is at least one solution of the optimal shape design problem.
\end{thm}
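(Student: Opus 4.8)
The plan is to apply the direct method of the calculus of variations, run on the graph $\G$ rather than on the shapes alone. First I would fix the value $m:=\inf_{\Omega\in\O}J(\Omega,u(\Omega))$; since $\O$ is (tacitly) nonempty and $J$ is real valued, $m<+\infty$. Then I would pick a minimizing sequence $\seq{\On}$ in $\O$, i.e.\ one with $J(\On,u(\On))\to m$ as $n\to\infty$, and pass to the associated sequence of pairs $(\On,u(\On))$, which by construction lies in $\G$.

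Next I would invoke hypothesis (i). By compactness of $\G$ the sequence $(\On,u(\On))$ has a subsequence $\subseq{\Onk,u(\Onk)}$ together with a pair $(\Omega^{*},u(\Omega^{*}))\in\G$ such that $\Onk\xrightarrow[]{\Os}\Omega^{*}$ and $u(\Onk)\rightsquigarrow u(\Omega^{*})$ as $k\to\infty$. Since this limit pair lies in $\G$, one automatically has $\Omega^{*}\in\O$ and that $u(\Omega^{*})$ is the unique solution of $P(\Omega^{*})$, so $\Omega^{*}$ is an admissible competitor for \eqref{opt_shape_des}. Because $J(\On,u(\On))\to m$ and, by the standing assumption, every subsequence of a convergent sequence keeps the same limit, also $J(\Onk,u(\Onk))\to m$.

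Finally I would use hypothesis (ii): the convergences $\Onk\xrightarrow[]{\Os}\Omega^{*}$, $u(\Onk)\rightsquigarrow u(\Omega^{*})$ and lower semi-continuity of $J$ yield
\begin{equation*}
J(\Omega^{*},u(\Omega^{*}))\le\liminf_{k\to\infty}J(\Onk,u(\Onk))=m,
\end{equation*}
whereas $J(\Omega^{*},u(\Omega^{*}))\ge m$ holds trivially by definition of the infimum. Hence $J(\Omega^{*},u(\Omega^{*}))=m$, so $\Omega^{*}$ solves the optimal shape design problem (and, incidentally, $m>-\infty$). The statement is essentially the abstract skeleton of the direct method, so there is no deep obstacle inside the argument itself; the only care needed is the bookkeeping that the limit shape lands in the \emph{smaller} family $\O$ and that the limit state is genuinely $u(\Omega^{*})$, both of which are forced by the way $\G$ and the convergence $\rightsquigarrow$ are defined. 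The substantive difficulty is displaced to \emph{verifying} (i) and (ii) for the concrete, rather singular functional \eqref{eqa:ObjectiveFunctProbLCF}, which is where elliptic regularity will be required, but that lies outside the present theorem.
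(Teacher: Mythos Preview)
Your proof is correct and follows exactly the direct method that the paper invokes: the paper does not spell out a proof of this theorem but merely cites \cite[Ch.~2]{ShapeOpt} and remarks that it is ``based on the general fact that lower semicontinuous functions always possess a minimum on a compact set.'' Your argument is precisely the standard unpacking of that fact, and your closing observation---that the real work lies in verifying (i) and (ii) for the concrete functional via elliptic regularity---is exactly how the paper proceeds in the subsequent sections.
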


  \subsection{\label{sec:Domains}$ C^{k,\phi} $-Admissible Domains via Defomation Maps} \label{Sec: form_design}

Now, we have adjust these terms to our present problem.

We choose $\Omega_{0} \subset \R{3}$ to be a $C^{k,\phi}$-domain for some  $\phi \in (0,1]$. Further let $B:=\Umg{r}{z} \subset \Omega_{0}, \, z \in \text{int}(\hat{\Omega})$ be a ball in its interior with distance $D:=\dist(\Umg{r}{z}, \partial \hat{\Omega})>0$ from the boundary. Then we set $\Omega_{b}:=\Omega_{0} \setminus B $ and choose a fixed constant $\mathcal{K}>0$.
 
The elements of 
\begin{equation}
\Uad_{k,\phi}:=\left\lbrace \Phi \in \left[\D{k,\phi}{\overline{\Oext}}\right]^3 \, \Big\vert \, \Norm{\Phi}{\Ck{k,\phi}{\Oext}{3}}\leq \mathcal{K},\, \Norm{\Phi^{-1}}{\Ck{k,\phi}{\Oext}{3}}\leq \mathcal{K} \right\rbrace,  
\end{equation}
where $\left[\D{k,\phi}{\Omega,\Omega'}\right]^n$ is the set of $C^{k,\phi}(\Omega,\Omega')$-diffeomorphisms, are called design variables. The set of admissible shapes assigned to $ \Omega_{b} $ is then given by
\begin{equation}
 \O_{k,\phi}:= \left\lbrace \Phi(\Omega_{b}) \, \vert \, \Phi \in \Uad_{k,\phi} \right\rbrace.
 \end{equation} 
Under these conditions $ \Uad_{k,\phi} $ is compact in the Banach space $ \left(\Ck{k,\phi'}{\overline{\Oext}}{3},\Norm{.}{\Ck{k,\phi'}{\Oext}{3}} \right) $ for any  $ 0\leq \phi' <\phi $ and $ k \in \N{}$ (compare \cite[Lemma 6.36]{GilbTrud}), what makes it obvious to define convergence of sets through $ \left[C^{k,\phi'}\right]^3 $-convergence of admissible functions. That means
$ \Omega_{n} \overset{\O}{\longrightarrow} \Omega, \, n \to \infty $ if and only if the  corresponding sequence $ \seq{\Phi_{n}} \subset \Ck{k,\phi}{\overline{\Oext}}{3}\,  n\in \N{}$   tends to $ \Phi \in \Ck{k,\phi'}{\overline{\Oext}}{3} $, where $\Phi_{n}(\Omega_{b})=\Omega_{n}\Phi(\Omega_{b})=\Omega $.

\subsection{Schauder estimates for Linear Elasticity Equation}\label{Subsec: solutions for elasticity equation}

We now invoke on the regularity results for the linear elasticity problem presented in Theorems 6.3-5 and 6.3-6 in \cite{Cia1988}, which are needed to proof Theorem \ref{Thm: schauder estimates u} below.
Accordingly, we set $ \Uad:=\Uad_{4,\phi} $, $ \phi \in (0,1) $ for the set of feasible design-variables and  $ \O:=\O_{4,\phi} $ for the set of  admissible shapes.



Volume force densities can be easily defined as gravitational or centrifugal loads, such that $f_\Omega=f^{\rm ext}\restriction_\Omega$ for some $f\in \left[C^{1,\phi}(\Omega^{\rm ext})\right]^3$. The surface load $g$ generally depends on the shape $\Omega$ in a non trivial way. One idea to deal with those loads as sections with uniform bound on the fibre norm: 
\begin{equation}
 G^{ad}_{\phi}(\mathcal{O}):=\left\lbrace g:\O \to \dot\bigcup_{\Omega\in\mathcal{O}} \Ck{2,\phi}{\RO}{3}\, \left\vert \,\Norm{ g(\Omega)}{\Ck{2,\phi}{\RO}{3}}\leq k_{1}<\infty ~\forall\Omega\in{\cal O}\right. \right\rbrace.
 \end{equation}   

One example for this construction is the restriction of a function $ g^{ext}\in 
\Ck{2,\phi}{\Oext}{3} $ that is defined on the larger set $ \Oext $ to $ \Omega $:
\begin{equation}
 g(\Omega):=g^{ext}\restriction_{\RO} \text{ with } k_1=\Norm{g^{ext}}{\Ck{2,\phi}{\RO}{3}}.
 \end{equation}

Due to it's construction, every shape $ \Omega \in \O $ has a Lipschitz-boundary and the associated Lipschitz constant can be chosen uniformly, what is proofed to be equivalent to a uniform cone property in \cite{Chen75}.  Moreover, each $ \Omega \in \O $ satisfies a hemisphere property where the corresponding hemisphere transformations of class $ C^{4,\phi'} $, $ \phi' \in [0,\phi] $ are uniformly bounded and so the boundary of every $ \Omega $ can be straigtened by Diffeomorphisms that can be estimated by the same constant. These facts, Korn's second inequality, the compactness of $ \partial \Omega,\, \Omega \in \O $ and Theorems 6.3-5 and 6.3-6 in \cite{Cia1988} lead to the following statement:


\begin{thm} \label{Thm: schauder estimates u} \cite[Theorem 5.6, 5.7]{Hanno}\\ 
Recall the PDE \eqref{eqa:ElasticityStrong}, where $ \Omega=\Phi(\Omega_{b}) $ for some $ \Phi \in \Uad $.
\begin{itemize}
\item[(i)] Let $ f\in \Ck{1,\phi}{\overline{\Oext}}{3} $, $ g\in \Ck{2,\phi}{\overline{\Omega}}{3}$  for some $ \phi\in (0,1) $. Then there exists exactly one solution $ u\in \Ck{3,\phi}{\overline{\Omega}}{3} $ that satisfies 
\begin{equation}\label{eq: schauder inequality u}
\Norm{u}{\Ck{3,\varphi}{\Omega}{3}} \leq C \bigl(  \Norm{f}{\Ck{1,\phi}{\Omega}{3}} + \Norm{g}{\Ck{2,\phi}{\RO}{3}} +  \Norm{u}{\Ck{0}{\Omega}{3}}  \bigr). 
\end{equation}

\noindent for any $ \varphi \in (0,\phi) $ and some positive constant $ C $ independent from $ \Omega \in \O $.

\item[(ii)] Let $ f\in \Ck{1,\phi}{\overline{\Oext}}{3} $. Moreover, let $ g=g(\Omega) $ be the associated mapping to some $ g \in G^{ad} $. Then,  
\begin{equation}
\Norm{u}{\Ck{3,\varphi}{\Omega}{3}} \leq C
\end{equation} 
holds for any $ \varphi \in (0,\phi) $ and a constant $ C$ which can be chosen uniformly w.r.t. $ \O $.
\end{itemize}
\end{thm}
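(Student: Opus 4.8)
The plan is to reduce the statement to the interior/boundary Schauder theory for linear elasticity as formulated in Theorems 6.3-5 and 6.3-6 of \cite{Cia1988}, and then to upgrade the resulting estimate to one with a constant that does not depend on $\Omega \in \O$ by exploiting the uniform geometric bounds coming from the deformation-map description of $\O=\O_{4,\phi}$. Throughout, $\Omega=\Phi(\Omega_b)$ with $\Phi\in\Uad=\Uad_{4,\phi}$, so $\|\Phi\|_{[C^{4,\phi}(\Oext)]^3}\leq\mathcal K$ and $\|\Phi^{-1}\|_{[C^{4,\phi}(\Oext)]^3}\leq\mathcal K$; this is the only information about $\Omega$ I intend to use quantitatively.

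\textit{Step 1 (pull-back to the fixed domain).} First I would transform the PDE \eqref{eqa:ElasticityStrong} on $\Omega$ to an equivalent elliptic system on the fixed reference domain $\Omega_b$ via the change of variables $y=\Phi(x)$. Since the elasticity operator is a second-order elliptic system with constant (L\'ame) coefficients, the pulled-back operator on $\Omega_b$ has coefficients built algebraically from $D\Phi$, $D\Phi^{-1}$ and their first derivatives; because $\Phi\in C^{4,\phi}$ with norms bounded by $\mathcal K$, these coefficients lie in $C^{3,\phi}$ (more than enough) with $C^{2,\phi}$-norms bounded by a constant depending only on $\mathcal K$, $\lambda$, $\mu$. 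One checks that ellipticity (Legendre–Hadamard / the Korn structure) is preserved with an ellipticity constant that is uniform in $\Phi\in\Uad$, again because $\det D\Phi$ and $\|D\Phi^{-1}\|$ are uniformly controlled by $\mathcal K$. The boundary $\partial\Omega_b$ is fixed and of class $C^{4,\phi}$, and the mixed Dirichlet/Neumann decomposition is carried along. Likewise the transformed right-hand sides $\tilde f$, $\tilde g$ have $C^{1,\phi}$- resp. $C^{2,\phi}$-norms bounded in terms of $\|f\|_{C^{1,\phi}}$, $\|g\|_{C^{2,\phi}}$ and $\mathcal K$.

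\textit{Step 2 (apply Schauder and Korn on the fixed domain).} On $\Omega_b$ I invoke Theorems 6.3-5 and 6.3-6 of \cite{Cia1988}: existence and uniqueness of a solution $\tilde u\in[C^{3,\varphi}(\overline{\Omega_b})]^3$ for $\varphi\in(0,\phi)$, together with the Schauder estimate
\begin{equation}
\Norm{\tilde u}{\Ck{3,\varphi}{\Omega_b}{3}} \leq C\bigl(\Norm{\tilde f}{\Ck{1,\phi}{\Omega_b}{3}}+\Norm{\tilde g}{\Ck{2,\phi}{\partial\Omega_b}{3}}+\Norm{\tilde u}{\Ck{0}{\Omega_b}{3}}\bigr),
\end{equation}
where $C$ depends only on $\Omega_b$, on the ellipticity constant, and on the $C^{2,\phi}$-bounds of the transformed coefficients — hence, by Step 1, only on $\mathcal K$, $\lambda$, $\mu$, $\varphi$, $\phi$, and the fixed geometry $\Omega_b$, \emph{not} on the particular $\Phi$. (Korn's second inequality enters here to handle the lower-order term and to close the a priori estimate; the paper already notes that the Korn and cone/hemisphere properties hold uniformly on $\O$.) Transporting back via $\Phi^{-1}$ and using $\|\Phi^{-1}\|_{C^{4,\phi}}\leq\mathcal K$ converts this into the claimed inequality \eqref{eq: schauder inequality u} on $\Omega$ with a constant $C$ independent of $\Omega\in\O$, proving (i).

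\textit{Step 3 (from (i) to (ii): removing the $\|u\|_{C^0}$ term).} For part (ii), where $g=g(\Omega)$ comes from a $G^{ad}$-section so that $\Norm{g(\Omega)}{\Ck{2,\phi}{\RO}{3}}\leq k_1$ uniformly and $f$ is a fixed restriction, it remains to bound $\Norm{u}{\Ck{0}{\Omega}{3}}$ uniformly. I would do this by a standard energy estimate: testing the weak form with $u$ itself and using Korn's inequality (with uniform constant on $\O$) gives $\Norm{u}{\HK{1}{\Omega}}\leq C(\Norm{f}{[L^2]^3}+\Norm{g}{[L^2(\partial\Omega)]^3})\leq C$; then either a Sobolev/Moser iteration or simply re-inserting into (i) and bootstrapping yields a uniform $C^0$ — indeed a uniform $C^{3,\varphi}$ — bound. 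Substituting into (i) absorbs the last term and gives $\Norm{u}{\Ck{3,\varphi}{\Omega}{3}}\leq C$ with $C$ uniform over $\O$.

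\textit{Main obstacle.} The routine part is the existence/Schauder machinery, which is quoted from \cite{Cia1988}. The genuine difficulty — and the only place real care is needed — is tracking, through the pull-back of Step 1, that \emph{every} constant (ellipticity modulus, Korn constant, Schauder constant, and the norms of the transformed data) depends on $\Phi$ only through the fixed bound $\mathcal K$ and not otherwise. This is exactly where the compactness of $\Uad$ in $[C^{4,\phi'}]^3$ and the uniform cone/hemisphere properties recorded just before the theorem are used; without the two-sided bound $\|\Phi\|,\|\Phi^{-1}\|\leq\mathcal K$ one could not keep $\det D\Phi$ bounded away from zero, and the uniformity in $\Omega$ — the whole point of the statement — would fail.
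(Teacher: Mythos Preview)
Your plan is sound and would go through, but it takes a different route from the paper's sketch. The paper does not pull the system back globally to $\Omega_b$; instead it applies Ciarlet's Theorems~6.3-5 and~6.3-6 directly on each $\Omega\in\O$, where the operator is still the constant--coefficient Lam\'e system, and obtains uniformity of the Schauder constant by observing that the only $\Omega$--dependent ingredients in Ciarlet's proof are the local hemisphere (boundary--straightening) diffeomorphisms, and that these are uniformly bounded in $C^{4,\phi'}$ over $\O$ by construction of $\Uad$. Your global pull--back is conceptually cleaner---one transformation instead of a covering by local charts---but note a technical mismatch in your Step~2: after the change of variables the system on $\Omega_b$ is a \emph{variable--coefficient} elliptic system, no longer the Lam\'e operator, so Ciarlet's theorems as stated do not apply to it verbatim; you would need to invoke the general Agmon--Douglis--Nirenberg Schauder theory (which underlies Ciarlet's argument anyway) rather than \cite{Cia1988} itself. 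With that substitution your argument is complete. The paper's route buys a direct citation of \cite{Cia1988} at the cost of tracking constants through the local charts; your route buys a fixed domain at the cost of citing the more general ADN machinery. Step~3 matches the paper's use of Korn's second inequality to control the lower--order term uniformly.
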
 
Hence, for fixed $ \phi \in (0,1) $ we define the state space 
\begin{equation}
 V^{\varphi}(\Omega):= \Ck{3,\varphi}{\overline{\Omega}}{3}, \text{ for some }\varphi \in (0,\phi).\end{equation}

\subsection{\label{sec:OpShapes}Existence for the SO  and the Optimal Reliabiliy Problem}\label{Sec: exsistence of optimal shapes}

Now motivate existence of optimal solutions to shape optimization problems where the constraints are given by thermal elasticity and where the cost functionals are of very general class. 

The objective is to find an optimal shape $ \Omega=\Phi(\Omega_{b}) $ within the set of $ C^{4,\phi} $-admissible shapes $ \O  $ which minimizes a local cost functional $ J (\Omega,u) =J _{vol}(\Omega,u) +J _{sur}(\Omega,u) $, where
\begin{equation}\label{Def: local cost functional}
\begin{split}
	J _{vol}(\Omega,u)&=\int_{\Omega}\mathcal{F}_{vol}(x,u,\nabla u,\nabla^{2}u,\nabla^{3}u)\,dx\\[1ex]
	J _{sur}(\Omega,u)&=\int_{\RO}\mathcal{F}_{sur}(x,u,\nabla u,\nabla^{2}u,\nabla^{3}u)\,dA.
\end{split}
\end{equation}
Here $ u $ solves the state Problem \eqref{eqa:ElasticityStrong}. 
Owing to the trace theorem\footnote{Confer for example \cite[5.5]{Evans}} the appearing surface integrals lead to a loss of regularity. Therefore, these functionals are to singular to be treated with weak solution theory \cite{Hanno,Schmitz}. Hence, regularity theory \cite{Agm59,Agm64} and strong solutions are indispensable.  

Now we need a definition of convergence $ u_{n} \rightsquigarrow u  $ corresponding to compactness of the graph $ \mathcal{G}=\{(\Omega,T,u) \, \vert \, \Omega\in \O \} $ in terms of \eqref{Def: Gcomp} that reflects the regularity characteristics of the present elasticity problem. This can be achieved by extension: 

Let $ m,q \in \mathbb{N}$, $\beta\in (0,1)$. Then the operator $p^ {m,q,\beta}:\big[\C{q,\beta}{\overline{\Omega}}\big]^{m} \to \big[C^{q,\beta}_{0}(\Oext) \big]^{m} $ maps $ v \in \big[\C{q,\beta}{\overline{\Omega}}\big]^{m} $ to $ v^{ext} \in  \big[\C{q,\beta}{\overline{\Omega^{ext}}}\big]^{3} $, compare \cite[Lemma 6.37]{GilbTrud}.

Hence, $ u_{n} \rightsquigarrow u \text{ as }  n\to \infty$: $\Leftrightarrow  u_{n}^{ext}\to u^{ext} \text{ in  } \big[C^{3,\phi}_{0}(\Oext) \big]^{m}$, turns out to be an appropriate choice $ (m=3,q=0,\beta=\phi) $. Moreover, it holds that also the extended solutions can be estimated uniformly due to the following inequality:
\begin{equation}
\Norm{u_{n}^{ext}}{\C{3,\phi}{\Oext}}\leq C \Norm{u_{n}}{\C{3,\phi}{\Omega}},
\end{equation}
where $ C=C_{q} $ is independent of $ \Omega $ and $ \Oext $.

 \begin{lem}[\textit{\textbf{Compactness of the Graph}}]\label{Lemma: compactness of the graph} \cite{Hanno}\\
Let $ \seq{\Omega_{n}}=\seq{\Phi_{n}(\Omega_{b})} \subset \O $ be an arbitrary sequence, where on any $ \Omega_{n} $ the setting of Theorem \ref{Thm: schauder estimates u} is given. Let $ \seq{u_{n}} \subset \big[\C{3,\phi}{\overline{\Omega_{n}}}\big]^{3} $ be the sequence of solutions to the state problems $\mathcal{P}(\Omega_{n}) $. Then, $ u_{n}\in V^{\varphi}(\On) $  and the sequence $ \seq{\Omega_{n},u_{n}} \subset \mathcal{G} $ has a subsequence $\subseq{\Omega_{n_{k}},u_{n_{k}}}$ such that $ \Omega_{n} \xrightarrow{\O} \Omega $, $ \Omega=\Phi(\Omega_{b}) $ as $ k\to \infty $, as well as $ u_{n_{k}}\underset{k\to \infty}{\rightsquigarrow} u $ for the corresponding solution $ u \in \big[\C{3,\phi}{\overline{\Omega}}\big]^{3} $ to $ \mathcal{P}(\Omega) $, where $ u\in V^{\varphi}(\Omega) $, $ 0<\varphi<\phi $.
\end{lem}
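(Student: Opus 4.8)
The plan is to combine the pre-compactness of the design variables in a slightly weaker Hölder topology, the uniform Schauder estimate of Theorem~\ref{Thm: schauder estimates u}(ii) together with the uniform extension estimate recalled above, and the uniqueness of strong solutions, in order to first extract a subsequence along which both the deformations $\Phi_n$ and the extended states $u_n^{ext}$ converge, and then to identify the limit of the states as the solution on the limit shape. First I would use that $\seq{\Phi_n}\subset\Uad=\Uad_{4,\phi}$ is relatively compact in $\Ck{4,\phi'}{\overline{\Oext}}{3}$ for every $\phi'\in[0,\phi)$, so that a subsequence satisfies $\Phi_{n_k}\to\Phi$ in $\Ck{4,\phi'}{\overline{\Oext}}{3}$. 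The bounds $\Norm{\Phi_{n_k}}{\Ck{4,\phi}{\Oext}{3}}\le\mathcal{K}$ survive in the limit by lower semicontinuity of the Hölder seminorm under uniform convergence of the derivatives, and applying the same extraction to the uniformly bounded inverses $\subseq{\Phi_{n_k}^{-1}}$ and passing to the limit in $\Phi_{n_k}\circ\Phi_{n_k}^{-1}=\mathrm{id}$ shows that the limit of the inverses is $\Phi^{-1}$ and that $\Phi\in\Uad$. Hence $\Omega:=\Phi(\Omega_b)\in\O$ and $\Omega_{n_k}\xrightarrow{\O}\Omega$ by the very definition of the convergence of admissible shapes.

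Next I would exploit the uniform a priori bound. By Theorem~\ref{Thm: schauder estimates u}(ii), $\Norm{u_n}{\Ck{3,\phi}{\Omega_n}{3}}\le C$ with $C$ independent of $n$, and the uniform extension estimate gives $\Norm{u_n^{ext}}{\C{3,\phi}{\Oext}}\le C'$ uniformly. Since the embedding $\C{3,\phi}{\overline{\Oext}}\hookrightarrow\C{3,\varphi}{\overline{\Oext}}$ is compact for every $\varphi\in(0,\phi)$ (Arzel\`a--Ascoli), after passing to a further subsequence I obtain $u_{n_k}^{ext}\to w$ in $\big[C^{3,\varphi}_{0}(\Oext)\big]^{3}$, where the limit $w$ still obeys $\Norm{w}{\C{3,\phi}{\Oext}}\le C'$ by lower semicontinuity and vanishes wherever all the $u_{n_k}^{ext}$ do. Setting $u:=w\restriction_{\overline{\Omega}}\in\Ck{3,\phi}{\overline{\Omega}}{3}$, this is exactly the convergence $u_{n_k}\rightsquigarrow u$ appropriate to the state space $V^\varphi(\Omega)=\Ck{3,\varphi}{\overline{\Omega}}{3}$, so $u\in V^\varphi(\Omega)$.

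It remains to show that $u$ solves $\mathcal{P}(\Omega)$, i.e. the linear elasticity system \eqref{eqa:ElasticityStrong} on $\Omega$. I would pull the system back to the fixed reference domain $\Omega_b$: $\tilde u_n:=u_n\circ\Phi_n$ solves a linear elliptic system on $\Omega_b$ whose coefficients are explicit algebraic expressions in $D\Phi_n$, $D^2\Phi_n$ and $(\det D\Phi_n)^{-1}$, and which therefore converge in $\C{2,\varphi}{\overline{\Omega_b}}$ to the corresponding expressions built from $\Phi$; the transformed volume and surface data converge likewise. Because $\tilde u_{n_k}=u_{n_k}^{ext}\circ\Phi_{n_k}\to w\circ\Phi=:\tilde u$ in $\Ck{3,\varphi}{\overline{\Omega_b}}{3}$ (composition of uniformly convergent maps with uniformly bounded derivatives), one may pass to the limit term by term in the interior equation and in the Neumann and Dirichlet boundary conditions on $\partial\Omega_b$, so $\tilde u$ solves the pulled-back problem and $u=\tilde u\circ\Phi^{-1}$ solves $\mathcal{P}(\Omega)$. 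By the uniqueness assertion in Theorem~\ref{Thm: schauder estimates u}, $u=u(\Omega)$, and therefore $\subseq{\Omega_{n_k},u_{n_k}}\subset\G$ converges to $(\Omega,u(\Omega))\in\G$ in the sense of \eqref{Def: Gcomp}, as required.

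The step I expect to be the main obstacle is the last one: one has to be sure that the pull-back is legitimate uniformly in $n$, so that the transformed operators stay uniformly elliptic with convergent coefficients, and that no part of the solution or of the boundary data is lost under the domain deformation when passing to the limit. This is precisely where the $C^{4,\phi}$-regularity of the deformation maps, the uniform straightening of the boundaries and the uniform cone and hemisphere properties of the shapes in $\O$ — which already enter Theorem~\ref{Thm: schauder estimates u} — are used. An alternative to the pull-back, should it be more convenient, is to pass to the limit directly in the weak formulation over the common ambient domain $\Oext$, using that $C^1$-convergence of $\Phi_{n_k}$ forces convergence of the characteristic functions $\chi_{\Omega_{n_k}}$ and of the surface measures on $\partial\Omega_{n_k}$.
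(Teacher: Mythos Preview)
Your proposal is correct and follows essentially the same approach as the paper: the paper's own argument (which is only a one-sentence sketch citing \cite{Hanno} and \cite[Lemma~6.36/6.38]{GilbTrud}) also rests on the uniform Schauder bounds of Theorem~\ref{Thm: schauder estimates u}, the uniform extension estimate, and an Arzel\`a--Ascoli type compactness in $[C^{3,\varphi}]^3$ for $\varphi<\phi$, together with compactness of $\Uad_{4,\phi}$ in $[C^{4,\phi'}]^3$. Your additional step of identifying the limit as the solution on $\Omega$ via pull-back to the fixed reference domain $\Omega_b$ (or alternatively via the weak formulation on $\Oext$) is exactly what the cited reference supplies and is not spelled out in the paper itself, so your write-up is in fact more complete than the text it is meant to replace.
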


Due to the uniform bounds this follows from arguments similar to the Arzéla-Ascoli Theorem, see \cite[Lemma 6.38]{GilbTrud}, and the uniform convergence of the solutions sequence $ \seq{u_{n}} $ in the $ \left[C^{3,\varphi}\right]^{3} $-Norm. 

This uniform boundedness and the compactness of the boundarys $ \RO $ combined with Lebesgue's Theorem lead to the next statement:

\begin{lem}[\textbf{\textit{Continuity of Local Cost Funktionals}}]
\label{Lemma:Continuity} \cite{Hanno}\\
Let $ \mathcal{F}_{vol},\, \mathcal{F}_{sur} \in C^{0}(\R{d})$ with $ d=3+\sum_{j=0}^{3}3^{j+1} $, and let the set $ \O $ consist of $ C^{0} $-admissible shapes. For $ \Omega \in \O $ und  $ u\in \C{3}{\overline{\Omega}},\, (m=3,q=3,\beta=0)$ consider the volume integral $  J_{vol}(\Omega,u) $ and the surface integral $ J_{sur}(\Omega,u) $.
Let $ \seq{\Omega_{n}}\in\O $ with $ \Omega_{n} \xrightarrow{\O}\Omega $ as $ n \to \infty $, $ \seq{u_{n}}\in \Ck{3}{\overline{\Omega_{n}}}{3} $ be a sequence with $ u_{n}\rightsquigarrow u $. Then,
\begin{itemize}
\item[(i)] $J _{vol}(\Omega_{n},u_{n})\to J_{vol}(\Omega,u)$ as $ n\to \infty $\,.
\item[(ii)] If the family $ \O $ consists only of $C^{1} $-admissible shapes, then $J _{sur}(\Omega_{n},u_{n})\to J_{sur}(\Omega,u)$ as $ n\to \infty $\,.
\end{itemize}
\end{lem}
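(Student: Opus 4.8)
The plan is to reduce both parts to a single analytic fact --- uniform convergence of the composed integrand on a fixed compact set --- and then to absorb, separately in each part, the fact that the domain of integration itself moves with $n$. First I would record what $u_{n}\rightsquigarrow u$ gives here: with $(m,q,\beta)=(3,3,0)$ it means $u_{n}^{ext}\to u^{ext}$ in $\big[C^{3}_{0}(\Oext)\big]^{3}$, so $\nabla^{j}u_{n}^{ext}\to\nabla^{j}u^{ext}$ uniformly on the fixed compact set $\overline{\Oext}$ for $j=0,1,2,3$; in particular the tuples $\zeta_{n}(x):=\big(x,u_{n}^{ext}(x),\nabla u_{n}^{ext}(x),\nabla^{2}u_{n}^{ext}(x),\nabla^{3}u_{n}^{ext}(x)\big)$ all range over one compact set $Q\subset\R{d}$. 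Since $\mathcal{F}_{vol}$ and $\mathcal{F}_{sur}$ are continuous, hence uniformly continuous and bounded on $Q$, the compositions $G_{n}:=\mathcal{F}\circ\zeta_{n}$ (with $\mathcal{F}=\mathcal{F}_{vol}$, resp.\ $\mathcal{F}_{sur}$) converge uniformly on $\overline{\Oext}$ to $G:=\mathcal{F}\circ\zeta$, where $\zeta$ is the analogous tuple built from $u^{ext}$. This is the only place continuity of the $\mathcal{F}$'s enters; everything else is geometric bookkeeping about how $\On=\Phi_{n}(\Omega_{b})$ approaches $\Omega=\Phi(\Omega_{b})$.

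For (i), since $\On,\Omega\subseteq\Oext$ and $u_{n}=u_{n}^{ext}\restriction_{\On}$, I would split
\[
J_{vol}(\On,u_{n})-J_{vol}(\Omega,u)=\int_{\On}(G_{n}-G)\,dx+\int_{\On\setminus\Omega}G\,dx-\int_{\Omega\setminus\On}G\,dx,
\]
so that the left-hand side is bounded by $|\Oext|\,\Norm{G_{n}-G}{C^{0}(\overline{\Oext})}+\Norm{G}{C^{0}(\overline{\Oext})}\,|\On\triangle\Omega|$. The first summand vanishes by the previous step, so the point is $|\On\triangle\Omega|\to0$. Here I would use that $\Phi_{n}\to\Phi$ uniformly, together with the uniform Lipschitz bound carried by the admissible family (so $\operatorname{Lip}(\Phi_{n}^{-1})\leq\mathcal{K}$), to deduce $\Phi_{n}^{-1}\to\Phi^{-1}$ uniformly on $\overline{\Oext}$; since every $\Omega\in\O$ has a Lipschitz boundary, $|\partial\Omega|=0$, hence the indicator functions $\mathbf{1}_{\On}=\mathbf{1}_{\Omega_{b}}\circ\Phi_{n}^{-1}$ converge to $\mathbf{1}_{\Omega}=\mathbf{1}_{\Omega_{b}}\circ\Phi^{-1}$ Lebesgue-a.e.\ on $\Oext$, and $|\On\triangle\Omega|\to0$ follows by dominated convergence.

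For (ii) the surface measure lives on the moving hypersurface $\partial\On=\Phi_{n}(\partial\Omega_{b})$, so I would pull back to the fixed compact manifold $\partial\Omega_{b}$: choose finitely many charts $T_{\ell}:\widehat{F}_{\ell}\to\partial\Omega_{b}$ with $\widehat{F}_{\ell}\subset\R{2}$ compact, subordinate to a partition of unity $(\chi_{\ell})$, and write $J_{sur}(\On,u_{n})$ as the corresponding finite sum of integrals over the $\widehat{F}_{\ell}$ of $(\chi_{\ell}\circ T_{\ell})\,\big(G_{n}\circ\Phi_{n}\circ T_{\ell}\big)\,\sqrt{\det g_{n,\ell}}$, where $g_{n,\ell}$ is the Gram matrix of the parametrization $\Phi_{n}\circ T_{\ell}$ of $\partial\On$. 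On each $\widehat{F}_{\ell}$ the factor $G_{n}\circ\Phi_{n}\circ T_{\ell}\to G\circ\Phi\circ T_{\ell}$ uniformly (first step, $\Phi_{n}\to\Phi$ uniformly, $G$ uniformly continuous), while $\sqrt{\det g_{n,\ell}}\to\sqrt{\det g_{\ell}}$ uniformly because $D(\Phi_{n}\circ T_{\ell})=(D\Phi_{n}\circ T_{\ell})\,DT_{\ell}\to(D\Phi\circ T_{\ell})\,DT_{\ell}$ uniformly --- and this is the one place that uses $\Phi_{n}\to\Phi$ in $\big[C^{1}\big]^{3}$, i.e.\ the $C^{1}$-admissibility. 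Each summand is then an integral of a uniformly convergent sequence over the fixed finite-measure set $\widehat{F}_{\ell}$, hence converges, and summing the finitely many summands gives $J_{sur}(\On,u_{n})\to J_{sur}(\Omega,u)$.

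I expect the step needing the most care to be $|\On\triangle\Omega|\to0$ in part (i): it is where one must pass from convergence of $\Phi_{n}$ to convergence of $\Phi_{n}^{-1}$ and genuinely invoke the uniform Lipschitz (equivalently cone) bounds carried by the admissible family --- rather than just continuity of the deformation maps --- together with the negligibility of the fixed boundary $\partial\Omega_{b}$. The surface change of variables in (ii) is routine once one commits to local charts, but it is precisely why $J_{sur}$ needs $C^{1}$-admissibility whereas $J_{vol}$ gets by with $C^{0}$; the rest is just uniform continuity of $\mathcal{F}$ on the compact range of its (uniformly $C^{3}$-bounded) arguments.
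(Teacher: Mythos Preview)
Your argument is correct and fleshes out exactly what the paper sketches in a single line before the lemma (``uniform boundedness and the compactness of the boundarys $\RO$ combined with Lebesgue's Theorem''): the uniform $C^3$-convergence of the extensions gives uniform convergence of the composed integrand on the fixed compact $\overline{\Oext}$, after which (i) is dominated convergence together with $|\Omega_n\triangle\Omega|\to0$, and (ii) is a pull-back through finitely many charts of the compact $\partial\Omega_b$, where the Gram factors converge because $\Phi_n\to\Phi$ in $C^1$. One cosmetic point: for genuinely $C^0$-admissible shapes the family only carries a uniform $C^{0,\phi}$ bound on $\Phi_n^{-1}$, not a Lipschitz bound as you write, but your estimate $\|\Phi_n^{-1}-\Phi^{-1}\|_\infty\to0$ goes through verbatim with the H\"older modulus in place of the Lipschitz one.
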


\noindent Now all requirements of Theorem \ref{Thm: existence of optimal shapes}  are statisfied:

\begin{thm}[\textbf{\textit{Solution to the SO and Optimal Reliability Problem}}]~\\
\label{thm:SolSO}
Let the set of admissible shapes be $ \Uad_{k,\phi}$ with $k=4$, $ \phi \in (0,1) $. Then the shape optimization problem \eqref{opt_shape_des} with the objective functional \eqref{Def: local cost functional} and the mechanical elasticity state equation $ P(\Omega) $ given by \eqref{eqa:ElasticityStrong} has at least one solution $\Omega^*\in\mathcal{O}$. 

In particular, applying the above to \eqref{eqa:ObjectiveFunctProbLCF}, there exists at least one solution to the Optimal Reliability Problem in all versions (i)-(iii) in Def.\ \ref{def:OptRel}. The same holds for the local Gompert's model.
\end{thm}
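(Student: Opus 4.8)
The plan is to read Theorem~\ref{thm:SolSO} as a direct application of the abstract machinery assembled in Section~\ref{Sec: basic notations}: by Theorem~\ref{Thm: existence of optimal shapes} it suffices to establish (i) compactness of the graph $\G$ and (ii) lower semicontinuity of the cost functional, and both ingredients have already been prepared above. First I would fix the concrete data corresponding to the theorem's hypotheses: the feasible design variables $\Uad=\Uad_{4,\phi}$, the admissible shapes $\O=\O_{4,\phi}$, the set convergence $\Omega_n\xrightarrow{\O}\Omega$ induced by $[C^{4,\phi'}]^3$-convergence of the defining deformation maps (under which $\Uad$ is compact), surface loads taken from $G^{ad}_\phi(\O)$, and the state space $V^\varphi(\Omega)=[C^{3,\varphi}(\overline\Omega)]^3$. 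With this setup Theorem~\ref{Thm: schauder estimates u}(ii) yields a $C^{3,\varphi}$-bound on the elasticity solution $u(\Omega)$ that is uniform in $\Omega\in\O$, and Lemma~\ref{Lemma: compactness of the graph} upgrades this to compactness of $\G$ with respect to the convergence $u_n\rightsquigarrow u$ defined through the extension operators $p^{m,q,\beta}$. That disposes of hypothesis (i).

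For hypothesis (ii) I would invoke Lemma~\ref{Lemma:Continuity}: because $\O$ consists of $C^{4,\phi}$- (hence $C^1$-) admissible shapes and the solutions enjoy the uniform $C^{3,\varphi}$ bound just noted, both $J_{vol}$ and $J_{sur}$ of the form \eqref{Def: local cost functional} are continuous --- a fortiori lower semicontinuous --- along any converging pair $(\Omega_n,u_n)\to(\Omega,u)$, provided the integrands $\mathcal{F}_{vol},\mathcal{F}_{sur}$ are merely $C^0$ functions of their arguments. Theorem~\ref{Thm: existence of optimal shapes} then produces a minimizer $\Omega^*\in\O$ of \eqref{opt_shape_des} for the general functional \eqref{Def: local cost functional} with elasticity state equation \eqref{eqa:ElasticityStrong}; this is the shape-optimization half of the assertion.

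It then remains --- and this is the only genuinely model-specific step --- to confirm that the reliability functional \eqref{eqa:ObjectiveFunctProbLCF} is an instance of \eqref{Def: local cost functional} with admissible integrands. Here $\mathcal{F}_{vol}\equiv 0$ and $\mathcal{F}_{sur}$ depends on $\nabla u$ alone: from $\nabla u$ one forms the amplitude stress $\sigma_a=\tfrac12\sigma(u)$, then the elastic von Mises amplitude $\sigma_a^{\rm el}$, then $\sigma_a^{\rm el-pl}={\rm SD}(\sigma_a^{\rm el})$ via Neuber shakedown, then $\varepsilon_a^{\rm el-pl}={\rm RO}(\sigma_a^{\rm el-pl})$ via \eqref{eqa:Ramberg-Osgood}, then $N_{i_{\rm det}}={\rm CMB}^{-1}(\varepsilon_a^{\rm el-pl})$ via \eqref{eqa:CMB}, and finally $(1/N_{i_{\rm det}})^m$. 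I would check that each link is continuous: the von Mises map $\sigma_a\mapsto\sigma_a^{\rm el}$ is continuous; the Neuber shakedown equation and the Ramberg--Osgood relation both define strictly increasing bijections of $[0,\infty)$, so ${\rm SD}$ and ${\rm RO}$ are continuous; and the Coffin--Manson--Basquin law is strictly decreasing in $N_{i_{\rm det}}$ since $b,c<0$, whence ${\rm CMB}^{-1}$ is continuous and strictly decreasing. The one delicate point is the apparent singularity where the stress vanishes: there $\varepsilon_a^{\rm el-pl}\to 0^+$ forces $N_{i_{\rm det}}\to\infty$, so $1/N_{i_{\rm det}}\to 0$ and the integrand extends continuously by $0$; hence $\mathcal{F}_{sur}\in C^0$, and being a bounded continuous function of the stress (which is itself uniformly bounded on $\O$ by the Schauder estimate) the map $\Omega\mapsto J_\Omega$ attains its minimum. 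The Gompert's functional $\Omega\mapsto\int_{\RO}e^{-\alpha N_{i_{\rm det}}(\nabla u)}\,dA$ from \eqref{eqa:locG} is handled identically: its integrand is continuous with values in $[0,1]$ and equals $0$ where the stress vanishes.

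Finally I would translate the shape minimizer back into reliability language. By Proposition~\ref{prop:WeibullAndGomperts}(i) the first failure time of the local Weibull model is ${\rm Wei}(\eta_\Omega,\bar m)$-distributed with $\eta_\Omega=J_\Omega^{-1/\bar m}$ and increasing baseline hazard (recall $\bar m\geq 1$), so Proposition~\ref{prop:NoDesignToLife}(ii) makes the condition $J_{\Omega^*}\leq J_\Omega$ for all $\Omega\in\O$ equivalent to each of (i)--(iii) in Definition~\ref{def:OptRel}; likewise, by Proposition~\ref{prop:WeibullAndGomperts}(ii) and Proposition~\ref{prop:NoDesignToLife}(iii) the Gompert's case is governed by the proportional-hazard constant $J_\Omega$ with a positive, increasing baseline hazard, giving the same equivalence. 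Hence the minimizer $\Omega^*$ of $J_\Omega$ constructed above solves the Optimal Reliability Problem in all three versions, for both models. I expect the main obstacle to be bookkeeping rather than analysis --- checking that the composition of ${\rm SD}$, ${\rm RO}$ and ${\rm CMB}^{-1}$, followed by the reciprocal and the $m$-th power, really yields a globally continuous and bounded integrand, so that the genuinely singular reliability functional falls under Lemma~\ref{Lemma:Continuity}; all the hard analysis (uniform Schauder bounds and the Arzel\`{a}--Ascoli-type compactness) is already contained in Theorem~\ref{Thm: schauder estimates u} and Lemma~\ref{Lemma: compactness of the graph}.
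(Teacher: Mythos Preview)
Your proposal is correct and follows essentially the same route as the paper: the paper's proof is the one-line observation that Lemmas~\ref{Lemma: compactness of the graph} and~\ref{Lemma:Continuity} supply the two hypotheses of Theorem~\ref{Thm: existence of optimal shapes}, and you have reproduced precisely this argument. You go further than the paper by spelling out why the composite integrand $(1/N_{i_{\rm det}})^m={\rm CMB}^{-1}\!\circ{\rm RO}\circ{\rm SD}$ (raised to the $m$-th power after reciprocation) is continuous and extends by zero at vanishing stress, and by invoking Propositions~\ref{prop:NoDesignToLife} and~\ref{prop:WeibullAndGomperts} to translate the minimizer back into the three reliability notions --- these steps are left implicit in the paper but are exactly what is needed.
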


\section{\label{sec:ShapeDerivatives}Continuous Shape Derivatives for Failure Probabilities}

\subsection{Basic Ideas of Shape Calculus}
In shape optimization, sensitivity analysis is an important
application which can provide necessary conditions for the
existence of optimal shapes and which can establish a link to
efficient numerical optimization schemes by means of the Eulerian
derivative.

In this section, we first introduce into basic terms of shape
sensitivity analysis and focus on \cite{SokZol92} and
\cite{ShapeOpt} what has also been summarized
in \cite{Schmitz}. A sound mathematical variation of domains
is crucial for differentiation of functions which can be defined
on different domains. For this purpose, the generation of a family
of perturbations $\{\Omega_t\}_{0\leq t<\varepsilon}$ of a given
domain $\Omega\subset\mathbb{R}^3$ has to be given where every
domain is simply connected and has a $C^k$-boundary, $k\geq1$. The
family $\{\Omega_t\}_{0\leq t<\varepsilon}$ is defined by a family
of injective transformations $\{\Phi_t\}_{0\leq
t<\varepsilon}$ with $\Phi_t(\Omega)=\Omega_t$ for $0\leq
t<\varepsilon$. As usually small deformations of $\Omega$ are
considered.  Let $\Omega^ {\rm ext}\subset\mathbb{R}^3$ be a domain with a
piecewise $C^k$-boundary for $k\geq1$ and with an almost
everywhere existing outward unit normal field $\nu$. Defining the
set of admissible speed vector fields by
\begin{align}
\begin{split}
V^k(\Omega^ {\rm ext})&=\left\{V\in
C^k_0(\mathbb{R}^3,\mathbb{R}^3)\,|\,V\cdot\nu=0\quad\textrm{on
$\partial \Omega^ {\rm ext}$ except for singular
points,}\right.\\
&\left.\hspace{36mm} V(x)=0\quad\textrm{for all singular
points}\right\},
\end{split}
\end{align}
and equipping $V^k(\Omega^ {\rm ext})$ with the topology induced by
$C^k_0(\mathbb{R}^3,\mathbb{R}^3)$. Let $V\in
C\left([0,\varepsilon),V^k(D)\right)$ be a vector field and let $\Phi_t=\Phi_t(V)$ for each $t\in\R{}$ be the associated flow.  

Now, we can define the term of shape differentiability according
to \cite{SokZol92}:
\begin{defn}\label{definition_shape_derivative_2}\textbf{(Shape Differentiability, Eulerian Derivative)}\\
Let $D\subset\mathbb{R}^n$ be open and $\Omega\subset D$ be
measurable. Furthermore, let $J:(\Omega,y)\longmapsto
J(\Omega,y)\in \mathbb{R}$ be a cost functional and let
$u(\Omega)$ be the unique solution of a state problem given in
$\Omega$. If the so-called Eulerian derivative
\begin{equation}
    dJ(\Omega)[V]=\lim_{t\rightarrow0^+}\frac{J(\Omega_t,u(\Omega_t))-J(\Omega,u(\Omega))}{t}
\end{equation}
exists for all vector fields $V\in V^k(\Omega^ {\rm ext})
$ and $V\mapsto dJ(\Omega)[V]$ is
linear and continuous, $J$ is called shape differentiable at
$\Omega$.
\end{defn}

Further important definitions are the material and the shape
derivative, see \cite{SokZol92}. We here present a version that is adapted to the use of elliptic regularity in the previous section: 

\begin{defn}\label{definition_material_derivative}\textbf{(Material Derivative, Shape Derivative)}\\
Let $\Omega^{\rm ext}\subset\mathbb{R}^3$ be open with a piecewise smooth
boundary and let $\Omega\subset \Omega^{\rm ext}$ be a bounded domain with a
$C^k$-boundary. Then, the material derivative $\dot{y}(\Omega;V)$
of $y(\Omega)\in \left[C^{k,\varphi}(\Omega)\right]^3$ with $s\in[0,k],1\leq p<\infty$
in the direction of a vector field $V\in V^ k(\Omega^ {\rm ext})$ exists if
\begin{equation}
\dot{y}(\Omega;V)=\lim_{t\rightarrow0^+}\frac{y(\Omega_t)\circ
\Phi_t(V)-y(\Omega)}{t}
\end{equation}
is an element of $\left[C^{3,\varphi}(\Omega)\right]$. Convergence in the above equation is in the $C^{3,\varphi}$ topology. The shape derivative $y'(\Omega;V)$ of
$y(\Omega)$ in the direction $V$ is defined as
\begin{equation}
y'(\Omega;V)=\dot{y}(\Omega;V)-\nabla y(\Omega)\cdot V(0),
\end{equation}
provided that $\nabla y(\Omega)\cdot V(0)\in \left[C^{k-1,\varphi}(\Omega)\right]^3$ for
all $V$. 
\end{defn}
The advantage of the shape derivative is that $y'(\Omega,V)=0$ holds if $V\restriction_{\partial\Omega}$ is tangential to $\partial\Omega$, which is not true for the material derivative.
 
 Under further
assumptions, one can show that for a shape differentiable cost
functional $J$ there exists a scalar distribution
$\psi\in\left(C^k_0(\partial\Omega)\right)'$ such that
\begin{equation}\label{SO.1.1.2}
    dJ(\Omega)[V]=\int_{\partial\Omega}(V(x)\cdot\nu(x))\,\psi(x)\,dA
\end{equation}
with $V$ a sufficiently smooth vector field and $\nu$ normal of
$\partial\Omega$. This can be derived from the Hadamard
formula, see \cite{SokZol92}.

To obtain necessary optimality conditions, one sets the
Eulerian derivative $dJ(\Omega)[V]$ to zero.
For sufficient optimality conditions in conjunction with Eulerian
derivatives of second order (shape Hessian), we refer to
\cite{Epp07,Eppler_Unger97}.

In Section \ref{section_sensitivity_analysis_fatigue_Design}, we
will consider the shape derivative $dJ(\Omega)[V]$ of the cost
functional of the local Weibull model for LCF and will describe the so-called
adjoint method. The next sections address the question of the
existence of optimal shapes in case of linear elasticity.

\subsection{Sensitivity Analysis in Mechanical Design for Probabilistic
Fatigue}\label{section_sensitivity_analysis_fatigue_Design}

In this section, we outline sensitivity analysis in mechanical
design for fatigue, following \cite{Schmitz}. Sensitivity
analysis can provide necessary conditions for the existence of
optimal shapes which often leads to further insights into the
shape optimization problems. Moreover, sensitivity analysis can
establish a link to more efficient numerical optimization schemes
by use of the Eulerian derivative.


State problems \eqref{eqa:ElasticityStrong} are now considered on
$C^{4,\phi}$-admissible
shapes $\Omega\in\O$. 
Let $u_t=u(\Omega_t)$ denote state solutions of
on the domains
$\Omega_t=\Phi_t[V](\Omega)$. Employing their
regularity and boundedness properties along with the fact that
$u\circ \Phi_t$ defines a $C^{3,\phi}$-Banach isomorphism
from $[C^{3,\phi}(\Omega_t)]^3$ to $[C^{3,\phi}(\Omega)]^3$ shows
the following statement:

\begin{lem} 
Given the state problem \eqref{eqa:ElasticityStrong}, let
$f\in[C^{1,\phi}(\overline{\Omega^{\textrm{\emph{ext}}}})]^3$ and
$g\in[C^{2,\phi}(\overline{\Omega^{\textrm{\emph{ext}}}})]^3$ for
$\phi\in(0,1)$ 
such that their corresponding norms are uniformly bounded by a
constant $k_{fg}$. Let $\Phi_t=\Phi_t[V]$ with
$V\in\mathcal{V}^{4}(\Omega^{\rm ext})$ and
$\Omega_t=\Phi_t(\Omega)$ with
$\Omega\in\O$. Furthermore,
suppose that $u_t$ solves  \eqref{eqa:ElasticityStrong} on $\Omega_t$.
Then, the pull-back solution $u^t=u_t\circ \Phi_t$ is
uniformly bounded in $[C^{3,\phi}(\Omega)]^3$ for $t$ sufficiently
small and depends continuously (in the $C^{3,\varphi}$-topology, $\varphi<\phi$) on $t$.
\end{lem}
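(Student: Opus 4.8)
The plan is to transfer the state problem \eqref{eqa:ElasticityStrong} from the moving domain $\Omega_t$ back to the fixed reference domain $\Omega$ via the diffeomorphism $\Phi_t = \Phi_t[V]$, and then to apply the uniform Schauder estimate of Theorem \ref{Thm: schauder estimates u} together with a continuity-in-parameter argument. First I would verify that the family $\{\Phi_t\}_{0\leq t<\varepsilon}$ generated as the flow of $V\in\mathcal{V}^4(\Omega^{\rm ext})$ is, for $t$ small enough, a family of $C^{4,\phi}$-diffeomorphisms whose $C^{4,\phi}$-norms and whose inverses' norms are bounded uniformly in $t$; this is standard ODE theory for flows of $C^4_0$ vector fields (the flow map inherits the regularity of $V$, and $\Phi_0 = \mathrm{id}$ so the bounds are uniform for small $t$). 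Consequently each $\Omega_t = \Phi_t(\Omega)$ lies in a set of shapes of the type $\O = \O_{4,\phi}$, so Theorem \ref{Thm: schauder estimates u}(ii) applies on each $\Omega_t$ and gives $u_t\in[C^{3,\phi}(\overline{\Omega_t})]^3$ with $\Norm{u_t}{\Ck{3,\varphi}{\Omega_t}{3}}\leq C$, with $C$ independent of $t$.

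**Main steps.** The key observation, already flagged in the excerpt, is that composition with $\Phi_t$ is a Banach-space isomorphism between $[C^{3,\phi}(\Omega_t)]^3$ and $[C^{3,\phi}(\Omega)]^3$, with operator norm and inverse norm bounded uniformly in $t$ (again because $\Phi_t\to\mathrm{id}$ in $C^{4,\phi}$). Hence the pull-back $u^t = u_t\circ\Phi_t$ satisfies $\Norm{u^t}{\Ck{3,\phi}{\Omega}{3}}\leq C' \Norm{u_t}{\Ck{3,\phi}{\Omega_t}{3}}\leq C'C$, which is the claimed uniform bound. For the continuous dependence on $t$ in the $C^{3,\varphi}$-topology with $\varphi<\phi$: one writes the weak/strong form of the elasticity PDE on $\Omega_t$, pulls it back to $\Omega$, and observes that $u^t$ solves a transformed elliptic system on the fixed domain $\Omega$ whose coefficients (built from $D\Phi_t$, $\det D\Phi_t$, the Lam\'e constants, and the transported data $f\circ\Phi_t$, $g\circ\Phi_t$) depend continuously on $t$ in $C^{1,\phi}$, resp.\ $C^{2,\phi}$. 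Given any sequence $t_n\to t_0$, the uniform $C^{3,\phi}$-bound plus the compact embedding $[C^{3,\phi}(\overline\Omega)]^3\hookrightarrow[C^{3,\varphi}(\overline\Omega)]^3$ (an Arzel\`a--Ascoli argument, cf.\ \cite[Lemma 6.36]{GilbTrud}) extracts a $C^{3,\varphi}$-convergent subsequence; its limit solves the limiting transformed PDE on $\Omega$, which by uniqueness is $u^{t_0}$. Since every subsequence has a further subsequence converging to the same limit, the full family converges, giving continuity of $t\mapsto u^t$ in $C^{3,\varphi}$.

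**Main obstacle.** The main technical point is controlling the transported coefficients and data of the pulled-back system uniformly and continuously in $t$: one must check that the transformation $\sigma(u)\mapsto$ (coefficients on $\Omega$) preserves the structural hypotheses (ellipticity, Korn-type coercivity, the $C^{1,\phi}$/$C^{2,\phi}$ regularity of data) needed to invoke Theorem \ref{Thm: schauder estimates u} on the fixed domain $\Omega$, and that the dependence $t\mapsto(\text{coefficients, data})$ is continuous in the relevant H\"older norms. Once this bookkeeping is done, ellipticity is stable for $t$ small because $D\Phi_t$ is a small perturbation of the identity, the uniform bounds are immediate from the uniform Schauder estimate, and the continuity follows from the compactness-plus-uniqueness scheme above. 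Everything else is routine, so I would present the coefficient-transformation computation only schematically and cite \cite{Cia1988,Hanno} for the elliptic regularity input.
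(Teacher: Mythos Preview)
Your proposal is correct and follows essentially the same route as the paper: obtain the uniform $C^{3,\phi}$-bound from the Schauder estimates of Theorem~\ref{Thm: schauder estimates u} applied on $\Omega_t\in\O_{4,\phi}$, transfer it to $\Omega$ via the pull-back isomorphism, and then upgrade to $C^{3,\varphi}$-continuity by a compactness-plus-uniqueness argument using the embedding $C^{3,\phi}\hookrightarrow C^{3,\varphi}$.

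The only minor difference in presentation is how the limit is identified. The paper first establishes $H^1$-continuity of $t\mapsto u^t$ directly, by showing that the pulled-back bilinear forms $B^t$ and linear forms $L^t$ on the fixed domain $\Omega$ depend continuously on $t$ (uniform coercivity plus Lax--Milgram), and only \emph{then} invokes the compactness in $C^{3,\varphi}$ to upgrade this $H^1$-convergence. You instead pass to a $C^{3,\varphi}$-convergent subsequence first and identify the limit by showing it solves the limiting transformed PDE. These are logically equivalent (your identification step implicitly uses the same weak-formulation continuity), so there is no substantive gap; the paper's ordering just makes the $H^1$-continuity an explicit intermediate statement rather than hiding it inside the subsequence argument.
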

 Note that $H^1$-continuity itself
is not sufficient to control the limits for material and shape
derivatives.
\begin{proof}
The uniform regularity follow from the results of Section 4. 
 One can show that $H^1$-continuity together with compactness in $C^{3,\varphi}$ implies
$C^{3,\varphi}$-continuity in our setting \cite{Schmitz}. 

Considering the weak
formulation of the mixed problem \eqref{eqa:ElasticityStrong} of linear
elasticity, we  obtain
continuity of pull-backed solutions $u^t=u_t\circ \Phi_t$
with $u_t=u(\Omega_t)$ and
$\Omega_t=\Phi_t[V](\Omega)$ by the continuity of Bilinear forms and right hand sides in the weak formalism. 
\end{proof}

Rewriting the bilinear form $B=B_\Omega$ of the elasticity
equation 
\begin{equation}\label{sensitivity.1.2.2}
    B_\Omega(u,v)=B(u,v)=\int_{\Omega}\left(\lambda\,(\nabla\cdot u
    \nabla\cdot
    v)+\frac{\mu}{2}\textrm{tr}[(\nabla u+\nabla u^T)(\nabla v+\nabla
    v^T)]\right)dx,
\end{equation}
let $B^t=B(\Omega,\nu,t)$ denote the pull-back of the bilinear
form 
$B_t=B_{\Omega_t}$ to $\Omega$:
\begin{equation}
    B^t(u,v)=B_{\Omega_t}(u\circ \Phi_t^{-1},v\circ \Phi_t^{-1})
\end{equation}
for functions $u,v\in[H^1_{\partial\Omega_D}(\Omega)]^3$. Also denoting the linear
form $L$ 
by $L_\Omega$, let $L^t=L(\Omega,\nu,t)$ be the pull-back of the
linear form $L_t=L_{\Omega_t}$ to $\Omega$:
\begin{equation}
    L^t(v)=L_{\Omega_t}(v\circ \Phi_t^{-1})
\end{equation}
for $v\in[H^1_{\partial\Omega_D}(\Omega)]^3$.



Cauchy-Schwarz and Korn's second inequality can be applied to show
that $(B^t)_{t\in[-\varepsilon,\varepsilon]}$ is 
coercive on $[H^1_{\partial\Omega_D}(\Omega)]^3$ with uniform ellipticity constant. Moreover, the
mapping
\begin{equation}
t\in[-\varepsilon,\varepsilon]\mapsto B^t(u,\cdot)\in
([H^1_{\partial\Omega_D}(\Omega)]^3)'
\end{equation}
is continuous in the strong topology on
$([H^1_{\partial\Omega_D}(\Omega)]^3)'$ for all $u\in
[H^1_{\partial\Omega_D}(\Omega)]^3$.
The pull-back of the loads is defined according to Section 3.5 in
\cite{SokZol92}:

\begin{defn}\textbf{(Pull Back Loads)}\label{definition_pull_back_loads}$  $\\
Let $\gamma(t)=\det(\nabla \Phi_t)$ and
$\omega(t)=\gamma(t)\left\|\nabla \Phi_t^{-1}\nu\right\|$ on
$\partial\Omega_{N}$ with $\nu$ outward normal and
$\omega(t)=1$ on
$\partial\Omega\setminus\partial\Omega_{N}$. Let
$f:\mathbb{R}^3\rightarrow\mathbb{R}^3$ and
$g:\mathbb{R}^3\rightarrow\mathbb{R}^3$ be the volume and surface
loads on $\Omega_t$ and $\partial\Omega_t$, respectively. Then,
define the pull-back of the loads as
\begin{align}
\begin{split}
f^t:\Omega\rightarrow\mathbb{R}^3\quad\textrm{and}\quad f^t=\gamma(t)\,f\circ \Phi_t,\\
g^t:\partial\Omega\rightarrow\mathbb{R}^3\quad\textrm{and}\quad
g^t=\omega(t)\,g\circ \Phi_t.
\end{split}
\end{align}
\end{defn}

If $f\in [C^{1,\phi}(\mathbb{R}^3)]^3$ and $g\in
[C^{2,\phi}(\mathbb{R}^3)]^3$, the mapping
$[-\varepsilon,\varepsilon]\rightarrow
([H^1_{\partial\Omega_D}(\Omega)]^3)',\,t\mapsto L^t$ given by
$L^t(v)=\int_\Omega f^tvdx+\int_{\partial\Omega}g^tvdA$ is
continuous.
%
%
Because continuity and uniform ellipticity conditions of $B^t$ and
$L^t$ are satisfied, one can show via results of functional
analysis, see Propositions in Appendix B of \cite{Schmitz},
that for $\varepsilon>0$ sufficiently small, the mapping
$[-\varepsilon,\varepsilon]\longrightarrow
[H^{1}(\Omega)]^3,\, t\longmapsto u^t=u_t\circ
\Phi_t$ is continuous, from which the continuity of $t\to u^ t$ in the $C^ {3,\varphi}$ topology follows by a compactness argument.

\subsection{Material and Shape Derivative of the Displacement Field}

The material and shape derivative of the displacement field $u$ is
now discussed, see also Section 3.5 in \cite{SokZol92}.
First, some additional operators and properties have to be
derived. Having defined $\varepsilon'(u)=-\frac{1}{2}\left(\nabla
u \nabla V+\nabla V^T\nabla u^T\right)$ for $u\in [H^1_{\partial\Omega_D}(\Omega)]^3$
and $V\in [C^k(\mathbb{R}^3)]^3$, we set
\begin{align}\label{sensitivity.2.1}
B'(u,v)=\int_\Omega\textrm{tr}[\varepsilon'(u)\sigma(v)+\sigma(u)\varepsilon'(v)+\nabla\cdot
V\sigma(u)\varepsilon(v)]dx.
\end{align}
$B'$ is the derivative of the bilinear form $B$ under the
deformation $\Omega\mapsto \Phi_t[V](\Omega)=\Omega_t$.
Here, we assumed that the Lame coefficients are constant.
Furthermore, let $u\in [H^2(\Omega)]^3,v\in [H^1_{\partial\Omega_D}(\Omega)]^3$ and
$k\geq2$ for $V\in [C^k(\mathbb{R}^3)]^3$. Then, we set
\begin{align}
\begin{split}
\label{eqa:RHSqdot}
&f''(u):=\left[\nabla\cdot (\nabla V\sigma(u))^T+\lambda\nabla
\textrm{tr}[\varepsilon'(u)]+2\mu\, \nabla\cdot\,\varepsilon'(u)
+\nabla\cdot(\nabla\cdot V\sigma(u))\right],\\
&g''(u):=\left[(\nabla V\sigma(u))^T\nu
+\lambda\textrm{tr}[\varepsilon'(u)]\nu+2\mu\varepsilon'(u)\nu+\nabla\cdot
V\sigma(u)\nu\right]
\end{split}
\end{align}
which leads to
 the following result:

\begin{prop}
For $t\in[0,\varepsilon]$ let $B_{\Omega_t}$ and $B'_t$ be
the bilinear forms defined in (\ref{sensitivity.1.2.2}) and
(\ref{sensitivity.2.1}), respectively, with $\Omega$ replaced by
$\Omega_t$. Let $f\in
[C^{2,\phi}(\overline{\Omega^{\textrm{\emph{ext}}}})]^3$ and $g\in
[C^{3,\phi}(\overline{\Omega^{\textrm{\emph{ext}}}})]^3$ for
$\phi\in(0,1)$. Moreover, let $q_t\in
[H_{\partial\Omega_D}^1(\Omega_t)]^3$ be the solution to the
following elasticity PDE in its weak form
\begin{align}\label{sensitivity.3.1}
B_{\Omega_t}(q_t,v)=\int_{\Omega_t}f'\cdot
v\,dx+\int_{\partial\Omega_{t_N}}g'\cdot
v\,dA-B_t'(u_t,v)\,\,\,\forall v\in
[H_{\partial\Omega_D}^1(\Omega_t)]^3
\end{align}
with $u_t$ state solution of \eqref{eqa:ElasticityStrong} on $\Omega_t$
and $f'=(\nabla\cdot V) f+\nabla fV,\,g'=(\nabla_{\partial\Omega}\cdot
V)g+(\nabla_{\partial\Omega} g)^TV$. Then, if $V\in
\mathcal{V}^{k,\textrm{ad}}$ for $k\geq4$ and $\varepsilon>0$ sufficiently
small, $q_t\in [C^{3,\phi}(\Omega_t)]^3$ and $
\|q_t\|_{[C^{3,\phi}(\Omega_t)]^3}<C$ holds where $C$ is
independent of $t$.
\end{prop}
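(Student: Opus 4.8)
The plan is to recognise the weak equation \eqref{sensitivity.3.1} as the weak form of an ordinary mixed linear elasticity boundary value problem of the type \eqref{eqa:ElasticityStrong} on $\Omega_t$, with modified volume and surface loads, and then to apply the Schauder theory of Theorem~\ref{Thm: schauder estimates u} together with the uniform estimates it furnishes.

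First I would integrate by parts in the term $B'_t(u_t,v)$ given by \eqref{sensitivity.2.1} with $\Omega$ replaced by $\Omega_t$. Since $\varepsilon'(u_t)$ is symmetric and each summand acts linearly on $\nabla v$, moving all derivatives off the test function $v$ produces exactly the volume density $f''(u_t)$ and the surface density $g''(u_t)$ of \eqref{eqa:RHSqdot}, the boundary contributions being supported on $\partial\Omega_{t_N}$ only since $v$ vanishes on $\partial\Omega_{t_D}$. Writing $\tilde f_t:=f'-f''(u_t)$, $\tilde g_t:=g'-g''(u_t)$, equation \eqref{sensitivity.3.1} thus becomes the weak form of
\begin{align*}
-\nabla\cdot\sigma(q_t) &= \tilde f_t && \text{on }\Omega_t,\\
\sigma(q_t)\,\nu_t &= \tilde g_t && \text{on }\partial\Omega_{t_N},\\
q_t &= 0 && \text{on }\partial\Omega_{t_D},
\end{align*}
i.e. of \eqref{eqa:ElasticityStrong} on $\Omega_t$ with data $\tilde f_t,\tilde g_t$ (the precise signs being those fixed by \eqref{eqa:RHSqdot}).

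The second step is to check the regularity of $\tilde f_t$ and $\tilde g_t$. For $f'=(\nabla\cdot V)f+\nabla f\,V$ and $g'=(\nabla_{\partial\Omega}\cdot V)g+(\nabla_{\partial\Omega}g)^TV$ this is immediate from $f\in[C^{2,\phi}(\overline{\Oext})]^3$, $g\in[C^{3,\phi}(\overline{\Oext})]^3$ and $V\in\mathcal{V}^{k,\textrm{ad}}\subset[C^{4}(\R{3})]^3$. For $f''(u_t)$ and $g''(u_t)$ one counts derivatives in \eqref{eqa:RHSqdot}: $f''$ involves $\nabla^2 u_t$, $\nabla^2 V$ and products with $\sigma(u_t)$, whereas $g''$ involves $\nabla u_t$, $\nabla V$, $\sigma(u_t)$ and the normal $\nu_t$. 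Since $f\in C^{2,\phi}$, $g\in C^{3,\phi}$ and $\Omega_t$ is of class $C^{4,\phi}$, Theorem~\ref{Thm: schauder estimates u} applied one regularity level higher -- which is legitimate because the admissible shapes are $C^{4,\phi}$ -- gives $u_t\in[C^{4,\phi}(\overline{\Omega_t})]^3$, hence in particular $\nabla^2 u_t\in[C^{1,\phi}(\overline{\Omega_t})]^{3\times3\times3}$, $\nabla u_t\in[C^{2,\phi}(\overline{\Omega_t})]^{3\times3}$ and $\nu_t\in[C^{3,\phi}(\partial\Omega_t)]^3$. Collecting these, $\tilde f_t\in[C^{1,\phi}(\overline{\Omega_t})]^3$ and $\tilde g_t\in[C^{2,\phi}(\partial\Omega_{t_N})]^3$. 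This one-order gain in the assumed smoothness of $f$ and $g$, together with $k\geq4$, is precisely what is needed to absorb the H\"older loss of the Schauder estimate for $u_t$ so that $\tilde f_t,\tilde g_t$ fall into the correct H\"older classes.

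Finally I would invoke Theorem~\ref{Thm: schauder estimates u}(i) on $\Omega_t$: it yields $q_t\in[C^{3,\phi}(\overline{\Omega_t})]^3$ together with, for any $\varphi\in(0,\phi)$,
\[
\|q_t\|_{[C^{3,\varphi}(\Omega_t)]^3}\leq C\bigl(\|\tilde f_t\|_{[C^{1,\phi}(\Omega_t)]^3}+\|\tilde g_t\|_{[C^{2,\phi}(\partial\Omega_t)]^3}+\|q_t\|_{[C^{0}(\Omega_t)]^3}\bigr),
\]
with $C$ independent of the shape. For $\varepsilon>0$ small the shapes $\Omega_t=\Phi_t[V](\Omega)$ keep uniformly controlled geometry (uniform Lipschitz constant, uniform cone/hemisphere property), so $C$ may be taken independent of $t$; moreover $\|u_t\|_{[C^{4,\varphi}(\Omega_t)]^3}$ is uniformly bounded by the uniform Schauder estimate (Theorem~\ref{Thm: schauder estimates u}(ii), one regularity level higher), and $f$, $g$, $V$ are fixed, whence $\|\tilde f_t\|_{[C^{1,\phi}(\Omega_t)]^3}$ and $\|\tilde g_t\|_{[C^{2,\phi}(\partial\Omega_t)]^3}$ are bounded uniformly in $t$. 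The lower-order term $\|q_t\|_{[C^{0}(\Omega_t)]^3}$ is absorbed by the usual compactness--uniqueness argument (the homogeneous elasticity problem on $\Omega_t$ has only the trivial solution and the differential operator does not depend on $t$), exactly as in the passage from (i) to (ii) in Theorem~\ref{Thm: schauder estimates u}. This gives $\|q_t\|_{[C^{3,\varphi}(\Omega_t)]^3}<C$ uniformly in $t\in[0,\varepsilon]$, which is the asserted bound. The step I expect to be the main obstacle is the second one: performing the integration by parts so that the right-hand side is genuinely of the form \eqref{eqa:RHSqdot}, and then verifying that, after the unavoidable H\"older loss in the Schauder estimate for $u_t$, the modified data $\tilde f_t,\tilde g_t$ are of class $[C^{1,\phi}]^3$ resp.\ $[C^{2,\phi}]^3$ with norms uniform in $t$ -- which is exactly what dictates the hypotheses $f\in C^{2,\phi}$, $g\in C^{3,\phi}$, $k\geq4$.
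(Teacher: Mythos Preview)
Your approach is exactly the one the paper sketches: integrate by parts in $B'_t(u_t,v)$ so that \eqref{sensitivity.3.1} becomes the weak form of a standard mixed elasticity problem on $\Omega_t$ with modified loads $\tilde f_t,\tilde g_t$ (this is the paper's rewriting \eqref{sensitivity.4.1}), and then apply the uniform Schauder estimates of Theorem~\ref{Thm: schauder estimates u}. Two small remarks: the paper records the sign convention $\tilde f_t=f'+f''_t$, $\tilde g_t=g'-g''_t$, and it does not pass through the extra step $u_t\in C^{4,\phi}$ --- the regularity $u_t\in C^{3,\phi}$ from Theorem~\ref{Thm: schauder estimates u} already yields $\nabla^2 u_t\in C^{1,\phi}$ and $\nabla u_t\in C^{2,\phi}$, which is all that $f''(u_t)$ and $g''(u_t)$ in \eqref{eqa:RHSqdot} require.
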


The major steps for the derivation of this statement is the
application of Schauder estimates, see \cite{Hanno,Schmitz}, and the realization that equation
(\ref{sensitivity.3.1}) can be rewritten as follows:
\begin{equation}\label{sensitivity.4.1}
B_{\Omega_t}(q_t,v)=\int_{\Omega_t}\tilde{f}_t\cdot
v\,dx+\int_{\partial\Omega_{t_N}}\tilde{g}_t\cdot
v\,dA\quad\forall v\in [H_{\partial\Omega_D}^1(\Omega_t)]^3
\end{equation}
with $\tilde{f}_t=f'+f''_t$ and $\tilde{g}_t=g'-g''_t$. Here,
$f''_t$ and $g''_t$ are defined as for
(\ref{eqa:RHSqdot}) with $\Omega$ replaced by
$\Omega_t$ and $u$ by $u_t$.
Then, one can show, similar as for the function $t\mapsto
u^t=u_t\circ \Phi_t[V]$, that the mapping $t\mapsto
q^t=q_t\circ \Phi_t[V]\in [C^{3,\varphi}(\Omega)]^3$ with
$t\in[0,\varepsilon]$ and $1>\phi>\varphi>0$ is continuous.
%
%
Using functional analytical arguments, one can finally motivate
that $q^t$ is the strong material derivative of $u^t$,
$t\in[0,\varepsilon]$.
Note that $t\mapsto \dot{u}^t\in [C^{3,\varphi}(\Omega)]^3$ with
$\phi>\varphi>0$ is also a continuous map, and that the shape
derivative is then given by $u'^t=\dot{u}^t-\nabla u^t\cdot V(t)$.

Finally, one can consider the shape differentiability and the
Eulerian derivative of the cost functional of the local and
probabilistic model for LCF. The cost functional is given by
\begin{align}\label{sensitivity.7.2}
\begin{split}
J(\Omega,u)=\int_{\partial\Omega}{\cal F}_{\rm sur}(\nabla u)\,
dA=\int_{\partial\Omega}\left(\frac{t}{N_{i_{\textrm{det}}}(\nabla
u)}\right)^m\, dA
\end{split}
\end{align}
with
\begin{equation}
N_{i_{\textrm{det}}}(M)=\varphi(\left[\lambda \, {\rm
tr}(M)I+\mu(M+M^T)\right]_{\textrm{vM}})\textrm{ for }
M\in\mathbb{R}^{3\times3},\quad\varphi=\textrm{CMB}^{-1}\circ\textrm{RO}\circ\textrm{SD}.
\end{equation}
Here, $[M]_{\textrm{vM}}$ is the von Mises stress value of a
matrix $M\in\mathbb{R}^{3\times3}$, and $\textrm{CMB, RO, SD}$ are
material and lifing functions. See \cite{Schmitz} for more
background. Following Section 3.3 in \cite{SokZol92} the
Eulerian derivative is given by\footnote{In
(\ref{sensitivity.8.1}), the Hessian $\nabla^2u$ is a
three-dimensional matrix with one index regarding the components
of $u$ which contracts with the index of the partial derivatives
of $\nabla\mathcal{F}_{\textrm{sur}}$, and the other two indices
with respect to the partial derivatives which contract with the
remaining index of $\nabla\mathcal{F}_{\textrm{sur}}$ and with
$\nu$.}
\begin{align}\label{sensitivity.8.1}
\begin{split}
dJ(\Omega)[V]&=\int_{\partial\Omega}
\nabla\mathcal{F}_{\textrm{sur}}(\nabla u):\nabla
u'(\Omega,V)\,dA\\
&\quad+\int_{\partial\Omega_N}\left(\nabla\mathcal{F}_{\textrm{sur}}(\nabla
u))\,:\,
\nabla^2u\cdot\nu(x)\right)\,(V(x)\cdot\nu(x))\,dA\\
&\quad+\int_{\partial\Omega_N}\bar{\kappa}\,\mathcal{F}_{\textrm{sur}}(\nabla
u) \,(V(x)\cdot\nu(x))\,dA
\end{split}
\end{align}
with $\bar{\kappa}$ the mean curvature of $\partial\Omega$ and
with\footnote{Here, $\nabla[M]_{\textrm{vM}}$ denotes the gradient
of the von Mises stress at the value of a matrix
$M\in\mathbb{R}^{3\times3}$.}
\begin{align}
\begin{split}
(\nabla\mathcal{F}_{\textrm{sur}}(\nabla
u))_{kl}=&m(\textrm{CMB}^{-1})(\textrm{RO}(\textrm{SD}(\sigma_{a}^{\rm el}(u))))^{m-1}(\textrm{CMB}^{-1})'(\textrm{RO}(\textrm{SD}(\sigma_{a}^{\rm el}(u))))\\&\cdot\textrm{RO}'(\textrm{SD}(\sigma_{a}^{\rm el}(u)))\cdot\textrm{SD}'(\sigma_{a}^{\rm el}(u))\cdot\nabla[\sigma(u)]_{\textrm{vM}}\cdot(\lambda\delta_{kl}
I+\mu(E_{kl}+E_{lk})) ,
\end{split}
\end{align}
where $(E_{kl})_{kl}\in\mathbb{R}^{3\times3}$ is everywhere zero
except for the $(k,l)$-th component which is one. Sufficiently
regular lifing and material functions $\textrm{CMB, RO, SD}$
ensure the existence of the derivatives of $\textrm{CMB}^{-1}$.
Under the assumption that the material derivative
$u'=u'(\Omega,V)$ exists and is sufficiently regular, the previous
results constitute an important advantage of the local and
probabilistic model for LCF compared to usual deterministic models
with infimum functions, where insufficient regularity is given due
to the non-differentiability of the infimum function.

\subsection{Continuous Adjoint Equation and 1${}^{\rm st}$ Order Optimality Conditions}

As the computation of the shape derivative $u'(\Omega,V)$ can be
numerically very costly, the so-called adjoint method is often
applied for numerical optimization schemes, consider Section 3.1
in \cite{ShapeOpt} and
\cite{SokZol92}. This method uses the shape derivative
$u'(\Omega,V)$ to rewrite
the Eulerian derivative of the cost functional.  
We now apply the adjoint method to cost functional
(\ref{sensitivity.7.2}) in the setting of 
with $C^{k,\phi}$-admissible shapes. Using the results of Theorem 3.11 in
\cite{SokZol92}, where a linear elastic boundary value problem describes
the shape derivative, the following holds: For the linear elastic
state problem \eqref{eqa:ElasticityStrong}, the material derivative
$u'(\Omega,V)$ is determined by
\begin{align}\label{sensitivity.11.1}
\begin{split}
&\nabla\cdot\sigma(u')=0\hspace{69mm}\textrm{in }\Omega,\\
&u'=0\hspace{82mm}\textrm{on }\partial\Omega_D,\\
&\sigma^e(u')\cdot \nu(x)=(V(x)\cdot\nu(x))\,f+(V(x)\cdot\nu(x))\,\bar{\kappa}\, g\\
&\hspace{24mm}-\nabla_{\partial\Omega}\cdot((V(x)\cdot\nu(x))\sigma_\tau)\hspace{24mm}\textrm{on
}\partial\Omega_N,
\end{split}
\end{align}
with
$\sigma_\tau=\sigma_\tau(u)=\sigma(u)\nu-(\nu\sigma(u)\nu)\nu$.
The corresponding weak formulation is given by
\begin{align}\label{sensitivity.12.1}
\begin{split}
&B(u',v)=l(V,v)\quad\forall v\in[H^1_{\partial\Omega_D}(\Omega)]^3,\\
B(w,v)&=\int_\Omega\lambda(\nabla\cdot w) (\nabla\cdot v)\,\,dx+\int_\Omega 2\mu\,{\rm tr}(\varepsilon(w)\varepsilon(v))dx,\quad w,v\in[H^1_{\partial\Omega_D}(\Omega)]^3,\\
l(V,v)&=\int_{\partial\Omega_{\textrm{N}}}\left[(V(x)\cdot\nu(x))\,f+(V(x)\cdot\nu(x))\,\bar{\kappa}\, g-\nabla_{\partial\Omega}\cdot((V(x)\cdot\nu(x))\sigma_\tau)\right]\,v\,dA\\
&=\int_{\partial\Omega_{\textrm{N}}}(V(x)\cdot\nu(x))\left[(f+\bar{\kappa}\,g)v+\sigma_\tau(\nabla_{\partial\Omega}
v)\right]\,dA,\quad v\in[H^1_{\partial\Omega_D}(\Omega)]^3.
\end{split}
\end{align}
Then, the adjoint state $p$ is defined via the adjoint problem
\begin{align}\label{sensitivity.13.1}
\begin{split}
B( v,p)&=\int_{\partial\Omega}
\nabla\mathcal{F}_{\textrm{sur}}(\nabla u):\nabla
 v\,dA\quad\forall v\in[H^1_{\partial\Omega_D}(\Omega)]^3.
\end{split}
\end{align}
Due to (\ref{sensitivity.12.1}) the relationship
$l(V,p)=B(u',p)=\int_{\partial\Omega}
\nabla\mathcal{F}_{\textrm{sur}}(\nabla u):\nabla u'\,dA$ holds if
the adjoint state $p$ exists in $V_{\textrm{DN}}$. Thus, the
adjoint method applied to the local and probabilistic model for
LCF given in (\ref{sensitivity.8.1}) results in the Eulerian
derivative
\begin{align}\label{sensitivity.14.1}
\begin{split}
dJ(\Omega)[V]&=l(V,p)+\int_{\partial\Omega_N}\left(\nabla\mathcal{F}_{\textrm{sur}}(\nabla
u)):
\nabla^2u\cdot\nu(x)\right)\,(V(x)\cdot\nu(x))\,dA\\
&\quad\quad\quad\quad+\int_{\partial\Omega_N}\bar{\kappa}\,\mathcal{F}_{\textrm{sur}}(\nabla
u) \,(V(x)\cdot\nu(x))\,dA\\
&=\int_{\partial\Omega_N}\left[(f+\bar{\kappa}\,g)\,p+\sigma_\tau(\nabla_{\partial\Omega}
p)+\nabla\mathcal{F}_{\textrm{sur}}(\nabla u)):
\nabla^2u\cdot\nu(x)+\bar{\kappa}\,\mathcal{F}_{\textrm{sur}}(\nabla
u)\right]\\
&\hspace{15mm}\,\,(V(x)\cdot\nu(x))\,dA.
\end{split}
\end{align}
This expression for the Eulerian derivative in fact is in Hadamard form. It can be exploited in a
gradient-based optimization scheme, as described at the end of
Section 4.3 in \cite{Schmitz}, for example. Note that the
regularity of the right-hand side of (\ref{sensitivity.13.1}) has
to be further discussed as $\nabla v$ is evaluated on the
surface $\partial\Omega$. Possibly the use of distribution theory
leads to an appropriate regularization technique to resolve this
issue. Furthermore note that the existence proof of the material
and shape derivatives still has to be finished in a mathematically
rigorous way.

Necessary optimality conditions can now also be given by setting
the Eulerian derivative (\ref{sensitivity.14.1}) to zero, i.e.
$dJ(\Omega)[V]=0$. Thereby, an optimal shape $\Omega^*$ has to
fulfill
\begin{align}\label{sensitivity.15.1}
\begin{split}
(f+\bar{\kappa}\,g)\,p+\sigma_\tau(\nabla_{\partial\Omega}
p)+\nabla\mathcal{F}_{\textrm{sur}}(\nabla u)):
\nabla^2u\cdot\nu(x)+\bar{\kappa}\,\mathcal{F}_{\textrm{sur}}(\nabla
u)=C\quad\textrm{on }\partial\Omega^*_{\textrm{N}}
\end{split}
\end{align}
for some constant $C$. In addition, the state and adjoint problem
for $u$ and $p$, respectively, yields the final BVPs that describe
the necessary optimality conditions for an optimal
$C^4$-admissible shape $\Omega^*$.

The previous approach leads to an efficient gradient-based
optimization scheme for mechanical design in fatigue and shows the
importance of a mathematical approach to shape optimization. In
multi-physical design environments which include fatigue design,
this can be very important. Note that in computational fluid
dynamics (CFD) gradient-based shape optimization is established,
see \cite{Schmidt}, and would benefit from an equally fast
optimization approach on the structural integrity side.
gasturbine blades are examples for design approaches where
efficiency is a decisive criteria and where complicated structural
integrity issues are present due to fatigue.


\section{\label{sec:RiskManagement}Risk Management through Optimal Service Scheduling}

\subsection{Intentions of Service Scheduling}
This section presents an idea, how to create an optimal maintenance schedule for a gas turbine by using the knowledge about failure probabilities from the former sections. Maintenance plays an important role in gas turbine operation, because it increases the reliable operation. Next to the fuel consumption service cost are the second largest portion of life cycle cost in operation of a gas turbine. Therefore an optimized maintenance schedule will improve profitability of a gas turbine.

Today, predictive maintenance actions for a gas turbine are determined by one life counter $c\left(t\right )$ which represents the engine model with regards to the consumed life of the gas turbine. If the gas turbine reaches the deterministic life counter limit $l_\text{det}$ for the respective maintenance interval, then the operator has to carry out the respective service according to its maintenance manual. The drawback of having only a deterministic life counter limit is, that we cannot extend interval limits. To overcome this point, we establish the probabilistic nature of the different failure mechanism into the maintenance model. This gives us the option to extend service intervals or to skip service actions.

\subsection{Modeling the Expected Cash Flow}
Our aim is to maximize the excepted revenue of the gas turbine for the given time interval $\left[0,\infty\right)$. We provide only a very basic maintenance / reward model of the gas turbine to establish the basic ideas in our modeling approach. Therefore we analyze the operators cash flow in present value formulation which is given by
\begin{align}
\label{eq:pv}
\text{pv}\left(\tau\right) &= e^{-i_{\text{eff}}\tau} \cdot I \cdot \chi\left(\tau\right) _{\left\lbrace\text{gas turbine is not failed and not in service}\right\rbrace}  &  \text{(income)}\nonumber\\
&- e^{-i_{\text{eff}}\tau} \cdot C_\text{R} \cdot \chi\left(\tau\right) _{\left\lbrace\text{gas turbine fails}\right\rbrace}  & \text{(failure costs)}\\
&- e^{-i_{\text{eff}}\tau} \cdot C_\text{M} \cdot \chi\left(\tau\right) _{\left\lbrace\text{gas turbine is not failed and starts service}\right\rbrace}   & \text{(service costs)}\nonumber 
\end{align}
where $\chi\left(\tau\right) $ is an indicator function which includes the failure time $T_\text{F}$ of the gas turbine, $I\geq0$ is the operators revenue per unit of time, $C_\text{M} $ is the service fee, $C_\text{R}$  are the failure costs and $0<i_{\text{eff}}\leq1$ is the discount factor. We get the cumulative cash flow by integrating over $\tau$
\begin{align}
\text{PV}=\int_0^\infty\text{pv}\left(\tau\right) d\tau.
\end{align}
Since the failure time $\tau_\Omega$ of the gas turbine or gas turbine component is a random variable with the underlying survival probability $S_\Omega\left(\tau\right)$, we can calculate the excepted present value (EPV). The operator earns only money $I$ in (\ref{eq:pv}a) or pays $C_\text{M}$ for service at time $\tau$ in (\ref{eq:pv}c), if the gas turbine survives until $\tau$ with probability $S\left(\tau\right)$. But also there is a hazard  $h\left(\tau\right) = \frac{f\left(\tau\right)}{ S\left(\tau\right)}$ to pay the failure costs $C_\text{R}$ at time $\tau$ in (\ref{eq:pv}b) . $h\left(\tau\right)$ is the hazard rate and $f\left(\tau\right)$ is failure density function. Further, we split the interval $\left[0,\infty\right]$ into parts where the gas turbine is in operation or in service. Therefore we establish the service duration $W>0$ and the service start times $t_i$, $i=1,2,\ldots$. We define the gas turbine uptime as
\begin{align}
U:=\left[0, t_{1}  \right]\cup\bigcup_{i=i}^\infty \left[t_i+W, t_{i+1}  \right]
\end{align}
and in sum we get the expected net present value
\begin{align}
\text{EPV}=\int_U e^{-i_{\text{eff}}\tau} S\left(\tau\right) \left[ I -C_\text{R}   h\left(\tau\right) \right]d\tau-\sum^\infty_{i=1}e^{-i_{\text{eff}}t_i} S\left(t_i\right) C_\text{M}.
\end{align}
\subsection{Optimal Service Intervals}
We establish four assumptions to convert our EPV term into an optimization problem. First, we assume that our survival function $S\left(\tau\right)$ is a function of the actual life counter $c\left(t\right )$, e.g. $S\left(\tau\right)=S\left(c\left(t\right )\right)$ and therefore the hazard rate and failure density are also functions of the life counter. In addition we assume only cyclic damage mechanism like LCF and therefore the the counter $c\left(t\right)$ give us the cumulated and weighted number of cyclic events until $t$. As a second step we assume that there only exists one service action which sets the gas turbine in an as new state. Third, service is done in a periodic pattern. Our last assumption is that he operating regime of the gas turbine is constant, that means the gradient of the life counter  $c\left(\tau\right)$ is constant.\\
Due to the four assumption we have to determine an optimal interval length $\Delta>0$ between two service action. Further we can conclude new properties for our EPV term from the assumptions. We conclude from assumption two and three, that the hazard rate $h\left(\tau\right)$ is set to zero due to a service and we obtain
\begin{align}
h\left(\tau\right) = 0 \quad \text{for all} \ \tau\in\left[i \left(\Delta+ W\right), \left(i+ 1\right) \left(\Delta+ W\right) \right], \ i=1,2,3,\ldots,
\end{align}
during a service action and also we get
\begin{align}
\label{eq:hr}
h\left(\tau\right) =  h\left(\tau \mod i \left(\Delta+ W\right) \right), \quad \text{for all}  \ \tau, \ i=1,2,3,\ldots.
\end{align}
\begin{figure}[t]
	\centering
		\includegraphics[width=1.0\textwidth]{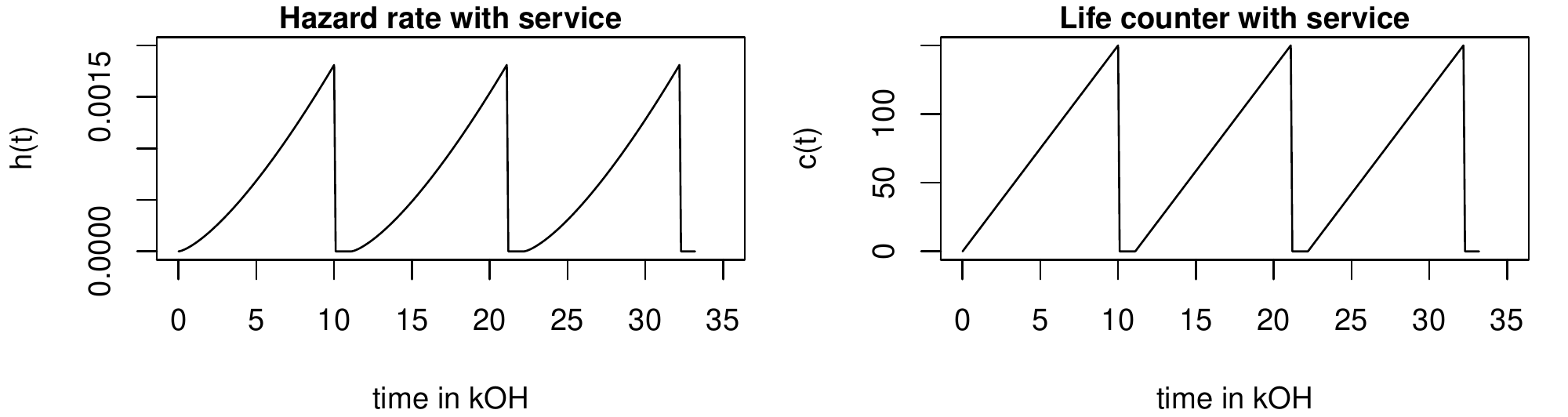}
   \caption{The left figure shows an idealized developing of the hazard rate function $h\left(t\right)$ and the right figure shows the developing of the associated life counter $c\left(t\right)$. In both figures the function are set to zero during an outage.}
  \label{fig:toymodel}
\end{figure}
Figure \ref{fig:toymodel} summarize this properties. For the cumulative hazard rate $H\left(\tau\right)$ we obtain from (\ref{eq:hr})  
\begin{align}
H\left(\tau\right) &=  \int_0^\tau  h\left(t\right) dt =  \int_0^\tau  h\left(t\mod i \left(\Delta+ W\right) \right) dt \notag \\
\label{eq:chr}&=\left\lfloor \frac{\tau}{\Delta + W}\right\rfloor H\left(\Delta\right) + H\left(\tau\mod i \left(\Delta+ W\right) \right)
\end{align}
with $\lfloor x \rfloor :=\max \left\lbrace  y \in\mathbb{Z} \ \vert \ y\leq x  \right\rbrace $. We note the relation
\begin{align}
\label{eq:relsh}
S\left(\tau\right) = \exp\left(- H\left(\tau\right)  \right)
\end{align}
between the survival function $S\left(\tau\right)$ and hazard rate $h\left(\tau\right)$  and we obtain from (\ref{eq:chr}) and (\ref{eq:relsh})
\begin{align}
\label{eq:sp}
S\left(\tau\right) =  S\left(\Delta\right)^{\left\lfloor \frac{\tau}{\Delta + W}\right\rfloor}    S\left(\tau \mod i \left(\Delta+ W\right) \right).
\end{align}
With the help of the assumptions, the service duration $W$, (\ref{eq:chr}) and (\ref{eq:sp}) we can simplify the EPV term and we get
\begin{align}
\begin{split}
\text{EPV}&=\int_U e^{-i_{\text{eff}}\tau} S\left(\tau\right) \left[ I -C_\text{R}   h\left(\tau\right) \right]d\tau-\sum^\infty_{i=1}e^{-i_{\text{eff}}t_i} S\left(t_i\right) C_\text{M}\\
&=\sum^\infty_{i=0} \int_{i\left(\Delta + W\right)}^{\left(i+ 1\right)\left(\Delta + W\right)} e^{-i_{\text{eff}}\tau} S\left(\tau\right) \left[ I - C_\text{R}  h\left(\tau\right)\right]d\tau\ \\
&- e^{-i_{\text{eff}}\left(\left(i+1\right)\Delta +i W\right)} S\left(\left(i+1\right)\Delta +i W\right) C_\text{M} \\
&= \sum^\infty_{i=0}  e^{-i_{\text{eff}} i \left( \Delta + W\right)}  S\left(\Delta\right)^i \int_0^\Delta e^{-i_{\text{eff}}\tau}S\left(\tau\right) \left[I - C_\text{R} h\left(\tau\right)\right]d\tau\\
& - e^{-i_{\text{eff}}\left(\left(i+1\right)\Delta +i W\right)} S\left(\Delta\right)^{i+1} C_\text{M}.
\end{split}
\end{align}
Finally, we split the EPV term into two geometric series and we get
\begin{align}
\text{EPV}\left(\Delta\right) = \frac{\int_0^\Delta e^{-i_{\text{eff}}\tau}S\left(\tau\right) \left[I - C_\text{R} h\left(\tau\right)\right]d\tau-C_\text{M} e^{-i_{\text{eff}}\Delta} S\left(\Delta\right) }{1-e^{-i_\text{eff}\left(\Delta + W\right)}S\left(\Delta\right)}
\end{align}
for our objective function. Our complete optimization problem is given by
\begin{align}
\begin{split}
\max_\Delta & \frac{\int_0^\Delta e^{-i_{\text{eff}}\tau}S\left(\tau\right) \left[I - C_\text{R} h\left(\tau\right)\right]d\tau-C_\text{M} e^{-i_{\text{eff}}\Delta} S\left(\Delta\right) }{1-e^{-i_\text{eff}\left(\Delta + W\right)}S\left(\Delta\right)}\\
\text{subject to} & \ \\
\Delta & \geq 0 .
\end{split}
\end{align}
As last step we present an example. We consider a Weibull distribution for the failure time $T_\text{F}$. We get 
\begin{align*}
h\left(\tau\right) = \frac{m}{\eta}\left(\frac{\tau}{\eta}\right)^{m-1} \quad \text{and} \quad S\left(\tau\right) =e^{-\left(\frac{\tau}{\eta}\right)^m}
\end{align*}
where $m$ is the shape parameter and $\eta$ is the scale parameter. We choose $\eta = 2000.0$, $m=2.4$, $I=50.0$, $C_\text{M}=300$, $C_\text{R}=500000.0$, $i_\text{eff}=0.003$ and $W=30.0$. The optimal interval $\Delta^\star$ is $153.0$ and the objective function value $\text{EPV}\left(\Delta^\star\right)$ is $12233.11$. 
\begin{figure}[t]
	\centering
		\includegraphics[width=1.0\textwidth]{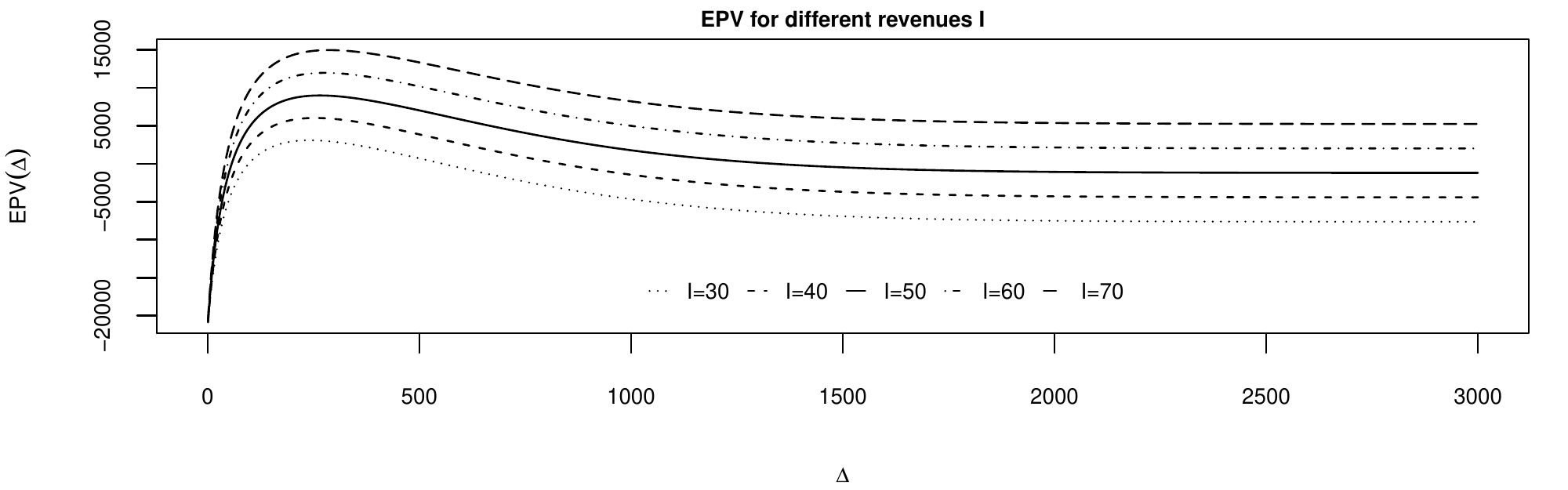}
	\caption{The figure shows the developing of the EPV over the interval length $\Delta$ for different revenue parameters $I$. The solid line belongs to the example from the text.}
	\label{fig:ana1c_ex}
\end{figure}
In figure \ref{fig:ana1c_ex} we plotted $\text{EPV}\left(\Delta\right)$ against $\Delta$ and the solid line presents our example.

\section{\label{sec:Conclusion}Conclusions and Outlook}
\label{sec:CO}

In the present article, we have given an overview of the manifold interconnections between failure time processes and shape optimization. While the field of probabilistic life calculation and the optimization of reliability is an urgent need in engineering, there are several mathematical implications that are interesting in their own right. At the same time, risk figures based on empirical or physics based laws can be used for a more rational decision making on the operation of mechanical components.
This work, rather than going into the detail of one of the ramifications of the topic, intends to present this circle of ideas as a whole (somewhat in the spirit of \cite{Pflug}, where also our title is borrowed from).   

At this point the question arises naturally, how this field will further develop. Naturally, the application to concrete engineering problems is most important.  If probabilistic methods can realize their potential in the more accurate and risk aware design for mechanical integrity, this will drive the future mathematical and algorithmic development.

Simultaneously  it is of interest that in this field the mathematical research does not follow the new development in engineering applications, but accompanies it. This is an opportunity to bring in advanced mathematical methods, like shape calculus, from the outset. 

A number of research topics  can be formulated on the basis of the investigations presented here: On the theoretical side, the mathematical construction of a flow towards optimal reliability on the infinite dimensional manifold of shapes \cite{Schulz} certainly is an interesting goal. 

On the numerical side, the implementation of shape derivatives in a discrete or continuous adjoint setting could lead to new design procedures based on reliability.

From a material science point of view, a further study of the multiscale models of Section \ref{sec:MultiScale} and their extension to multi-load and random load scenarios poses a new challenge, in particular what concerns the interplay between modelling and experimental validation.   Also, more damage mechanism, as e.g. creep or thermo-mechanical fatigue (TMF), have to be taken into account.

Last but not least, the impact of reliability calculations on service actions, in particular if more diverse service actions as "repair as new" are available, is an interesting topic for future research.
\vspace{1cm}

\noindent \textbf{Acknowledgements:} Hanno Gottschalk would like to thank Sergio Albeverio, Ana Bela Cruceiro and Derek Holms for their kind invitation to the CIB and the staff of CIB for their hospitality. Nadine Moch and Mohamed Saadi have been supported by AG Turbo Project 4.1.2 and 4.1.13 co financed by BMWi and Siemens Energy. We also thank T.\ Beck (TU Kaiserslautern), B.\ Beckmann, H.\ Harders, G.\ Rollmann and A.\ Sohail (Siemens Energy) for interesting discussion.

\bibliographystyle{plain}
\bibliography{quellen}
\end{document}